\documentclass[smallextended,referee,envcountsect]{svjour3}
\usepackage [latin1]{inputenc}
\usepackage{amsmath,amssymb}
\usepackage{marvosym,mathtools}
\usepackage[numbers,sort&compress]{natbib}
\usepackage[colorlinks,linkcolor=blue,urlcolor=blue,citecolor=blue]{hyperref}
\usepackage{graphicx,subfig}
\usepackage{float}
\usepackage{epstopdf}
\usepackage{algorithm}
\usepackage{algorithmicx}    % ºËÐÄËã·¨»·¾³°ü£¨algpseudocode ÒÀÀµËü£©
\usepackage{algpseudocode}   % Ìá¹© \For, \If µÈÎ±´úÂëÃüÁî
\usepackage{tcolorbox}

\smartqed
%This command right justifies \qed throughout the paper.
\usepackage{graphicx}
%This package is used to insert figures.
\journalname{}

\setlength{\textheight}{23cm} \setlength{\textwidth}{14cm}  %\textwidth 13cm
\setlength{\topmargin}{-0.8cm} \setlength{\oddsidemargin}{0cm}

\usepackage{fancyhdr}
\pagestyle{fancy}
\fancyhf{}
\fancyhead[RE]{\footnotesize }
\fancyhead[LO]{\footnotesize  }
\fancyhead[LE,RO]{\footnotesize ~\thepage~}

\usepackage{ntheorem}
\theoremheaderfont{\bfseries\upshape}
\theorembodyfont{\upshape}
\renewtheorem{remark}{\it Remark}[section]
\renewtheorem{example}{Example}[section]

\begin{document}

\title{Inertial accelerated primal-dual algorithms for non-smooth convex optimization problems with linear  equality constraints}

\author{Huan Zhang$^ {1}$ \and Xiangkai Sun$^{2}$ \and Shengjie Li$^{1}$ \and Kok Lay Teo$^{3}$}

\institute{\\Huan Zhang \at{\small zhanghwxy@163.com} \\
           \\ Xiangkai Sun  \at {\small sunxk@ctbu.edu.cn}\\
           \\Shengjie Li (\Letter) \at{\small lisj@cqu.edu.cn}\\
           \\ Kok Lay Teo \at{\small K.L.Teo@curtin.edu.au}\\\\
$^{1}$College of Mathematics and Statistics, Chongqing University, Chongqing 401331, China\\\\
$^{2}$Chongqing Key Laboratory of Social Economy and Applied Statistics, College of Mathematics and Statistics, Chongqing Technology and Business University,
Chongqing 400067, China\\\\
$^{3}$School of Mathematical Sciences, Sunway University, Bandar Sunway, 47500 Selangor Darul Ehsan, Malaysia.}

\date{Received: date / Accepted: date}
%The correct dates will be entered by the editor.

\maketitle

\begin{abstract}
This paper is devoted to the study of an inertial accelerated primal-dual algorithm, which is based on a second-order differential system with time scaling, for solving a non-smooth convex optimization problem with linear equality constraints. We first introduce a second-order differential system with time scaling associated with the non-smooth convex optimization problem, and then obtain fast convergence rates for the primal-dual gap, the feasibility violation, and the objective residual along the trajectory generated by this system. Subsequently, based on the setting of the parameters involved, we propose an inertial accelerated primal-dual algorithm from the time discretization of this system. We also establish fast convergence rates for the primal-dual gap, the feasibility violation, and the objective residual. Furthermore, we demonstrate the efficacy of the proposed algorithm through numerical experiments.
\end{abstract}
\keywords{  Differential system \and  Inertial algorithm \and Non-smooth convex optimization  \and  Iterates convergence}
\subclass{ 34D05\and 37N40 \and 90C25}

\section{Introduction}

Let $\mathcal{H}$ and $\mathcal{G}$ be real Hilbert spaces. Let $f:\mathcal{H}\rightarrow \mathbb{R}$ be a differentiable convex function. The convex optimization problem with linear equality constraints is defined as
\begin{eqnarray}\label{non}
\left\{ \begin{split}
&\mathop{\mbox{min}}\limits_{x\in\mathcal{H}}~~{f(x)}\\
&\mbox{s.t.}~~Ax=b,
\end{split}
\right.
\end{eqnarray}
where $A:\mathcal{H}\rightarrow\mathcal{G}$ is a continuous linear operator and $b\in\mathcal{G}$. The optimization problem of  type (\ref{non}) is an important model and has been  used in a wide range of fields, such as image recovery, machine learning, and network optimization, see \cite{boyd2010,gold2014,ouyang2015,li2019} and the references therein.

Nowadays, primal-dual dynamical systems offer a powerful framework for analyzing optimization algorithms designed to solve Problem (\ref{non}). It provides both theoretical insights into existing optimization algorithms and practical tools, such as Lyapunov theory, to simplify convergence analysis. In recent years, the use of primal-dual dynamical systems featuring viscous damping, time scaling, and extrapolation coefficients has led to many successful results for solving Problem (\ref{non}) from various perspectives. Bo\c{t} and Nguyen \cite{bot2021n} first propose the following second-order primal-dual  dynamical system
\begin{equation}\label{bot}
\left\{ \begin{split}
&\ddot{x}(t)+\frac{\alpha}{t}\dot{x}(t)+\nabla_x \mathcal{L}_{\sigma} \left(x(t),\lambda(t) + \theta t\dot{\lambda}(t)\right) =0,\\
&\ddot{\lambda}(t)+\frac{\alpha}{t} \dot{\lambda}(t)-\nabla_{\lambda} \mathcal{L}_{\sigma} \left(x(t)+\theta t \dot{x}(t),\lambda(t)\right)=0,
\end{split}
\right.
\end{equation}
where $t \geq t_0 >0$, $\alpha>3$, $ \sigma \geq 0$, $\frac{1}{2} \geq \theta \geq \frac{1}{\alpha-1}$, and $\mathcal{L}_{\sigma}(x,\lambda) \coloneqq f(x)+\left \langle \lambda,Ax-b \right \rangle+\frac{\sigma}{2}\|Ax-b\|^2$. They obtain the $\mathcal{O} \left(\frac{1}{t^2} \right)$ convergence rate for the primal-dual gap and feasibility violation, and show that the trajectory generated by System (\ref{bot}) converges weakly to an optimal solution of Problem (\ref{non}).
Zeng et al. \cite{zeng2023} establish the $\mathcal{O} \left(t^{-\frac{2\alpha}{3}} \right)$ convergence rate for the primal-dual gap and feasibility violation along the trajectory generated by System (\ref{bot}) with $ 0 <\alpha \leq 3$, $\sigma=0$, and $\theta=\frac{3}{2\alpha}$.
Subsequently, Hulett and Nguyen \cite{hn2023} propose the following second-order primal-dual dynamical system with time scaling
\begin{equation}\label{hn}
\left\{ \begin{split}
&\ddot{x}(t)+\frac{\alpha}{t}\dot{x}(t)+\beta(t)\nabla_x \mathcal{L}_{\sigma} \left(x(t),\lambda(t) + \theta t\dot{\lambda}(t)\right) =0,\\
&\ddot{\lambda}(t)+\frac{\alpha}{t} \dot{\lambda}(t)-\beta(t)\nabla_{\lambda} \mathcal{L}_{\sigma} \left(x(t)+\theta t \dot{x}(t),\lambda(t)\right)=0,
\end{split}
\right.
\end{equation}
where $\beta : [t_0, +\infty) \rightarrow \mathbb{R}$ is a time scaling function.  They establish the $\mathcal{O} \left(\frac{1}{t^2 \beta(t)} \right)$ convergence rate for the primal-dual gap, the feasibility violation, and the objective residual along the trajectory generated by System (\ref{hn}).
In order to improve convergence rates, He et al. \cite{hehufang2022} propose the following ``second-order'' primal + ``first-order'' dual dynamical system
\begin{equation}\label{he}
\left\{ \begin{split}
&\ddot{x}(t)+r \dot{x}(t)+\beta(t)\nabla_x \mathcal{L}_{\sigma} \left(x(t),\lambda(t)\right) =0,\\
&\dot{\lambda}(t)-\beta(t) \nabla_{\lambda} \mathcal{L}_{\sigma} \left(x(t)+\theta \dot{x}(t),\lambda(t)\right)=0,
\end{split}
\right.
\end{equation}
where $r\geq 0$ and $\theta \geq 0$. They obtain the $\mathcal{O} \left(\frac{1}{e^{r t}} \right)$ exponential convergence rate for the primal-dual gap, the feasibility violation, and the objective residual.
For more details on continuous-time primal-dual dynamical systems for solving Problem (\ref{non}), we refer to \cite{h2021,he2023,2024zhu,hetian2025,li2025,s2025} and the references therein.

Recently, many researchers propose different kinds of  inertial accelerated algorithms based on the time discretization of primal-dual dynamical systems for solving Problem (\ref{non}).
Bo\c{t} et al. \cite{bot2023} propose a fast augmented Lagrangian algorithm from the time discretization of System (\ref{bot}) and obtain the $\mathcal{O} \left( \frac{1}{ k^2} \right)$ convergence rate for the primal-dual gap, the feasibility violation, and the objective residual.
By the time discretization of System (\ref{hn}) with $\frac{\alpha}{t}=r \geq 0$ and $\theta t=\delta >0$, Ding et al. \cite{dingliu2025} propose an inertial algorithm and obtain the $\mathcal{O} \left( \frac{1}{ \beta_k} \right)$ convergence rate for the primal-dual gap, the feasibility violation, and the objective residual.
He et al. \cite{hehu2022} design a non-ergodic primal-dual algorithm with
the $\mathcal{O} \left( \frac{1}{ k^{\alpha-1}} \right)$ convergence rate in both the feasibility violation and the objective residual, tailored to the time discretization of System (\ref{he}) with $r=\frac{\alpha}{t}$, $\sigma=0$, and $\theta=\frac{1}{\alpha-1}$, $\forall \alpha > 1$.
By the time discretization of a Tikhonov regularized primal-dual dynamical system introduced in \cite{2024zhu}, Zhu et al. \cite{zhufang2025} propose a fast primal-dual algorithm for solving Problem (\ref{non}), and also obtain the $\mathcal{O} \left( \frac{1}{ k^2} \right)$ convergence rate for the primal-dual gap, the feasibility violation, and the objective residual.

We observe that there is limited research on inertial accelerated algorithms from a dynamical system perspective for solving Problem (\ref{non}) where objective functions have a composite structure, although there are some preliminary results available in \cite{hehuang2025,l2025} on accelerated algorithms for the unconstrained optimization problem. Therefore, by the time discretization of second-order differential systems, we will introduce a new inertial accelerated primal-dual algorithm to solve the non-smooth optimization problem
\begin{eqnarray}\label{constrained}
\left\{ \begin{split}
&\mathop{\mbox{min}}\limits_{x\in\mathcal{H}}~~{f(x)+g(x)}\\
&\mbox{s.t.}~~Ax=b,
\end{split}
\right.
\end{eqnarray}
where $f: \mathcal{H}\rightarrow \mathbb{R}$ is a differentiable function and its gradient $\nabla f$ is Lipschitz continuous, and $g: \mathcal{H}\rightarrow \mathbb{R}$ is a proper convex and lower semi-continuous function.
It is worth noting that many practical problems in various fields can be modeled as Problem (\ref{constrained}), such as image restoration, support vector machine, and sparse portfolio optimization problems \cite{d2012,t2017,k2020,t2020}. Moreover, several algorithms have been developed for solving Problem (\ref{constrained}), including the random coordinate descent algorithm \cite{n2014}, augmented Lagrangian algorithms \cite{xu2017,d2023}, and primal-dual algorithms \cite{he2022,jiang2024}.

In this paper, we first introduce the following second-order differential system, which consists of viscous damping, extrapolation and time scaling,
\begin{equation}\label{dyn}
\left\{ \begin{split}
&\ddot{x}(t)+\frac{\alpha}{t}\dot{x}(t)+\beta(t)\partial_{x}\mathcal{L}_\rho\left(x(t),\lambda(t)
+\frac{t}{\alpha-1}\dot{\lambda}(t)\right)\ni0,\\
&\ddot{\lambda}(t)+\frac{\alpha}{t}\dot{\lambda}(t)-\beta(t)\nabla_\lambda \mathcal{L}_\rho\left(x(t)+\frac{t}{\alpha-1} \dot{x}(t),\lambda(t)\right)=0,
\end{split}
\right.
\end{equation}
where $t\geq t_0>0$, $ \mathcal{L}_\rho : \mathcal{H} \times \mathcal{G} \to \mathbb{R}$ is the augmented Lagrangian saddle function (see  (\ref{asd})  for details), $\frac{\alpha}{t}$ is the viscous damping parameter, $\beta:[t_0,+\infty)\rightarrow(0,+\infty)$ is the time scaling function which is  non-decreasing and continuously differentiable, and  $ \frac{t}{\alpha-1} $ is the extrapolation parameter. Then, we propose an inertial accelerated primal-dual algorithm by discretizing System (\ref{dyn}), for solving Problem (\ref{constrained}), and give some convergence analysis.
The contributions of this paper can be more specifically stated as follows:
\begin{itemize}
\item[{\rm (i)}] We propose a new second-order differential system (\ref{dyn}) with time scaling for solving the non-smooth optimization problem (\ref{constrained}). Compared with the systems in \cite{bot2023}, System (\ref{dyn}) incorporates viscous damping, extrapolation and time scaling.

\item[{\rm (ii)}]  Under mild assumptions on the parameters, we show that the primal-dual gap along the trajectories generated by System (\ref{dyn})  enjoys the $ \mathcal{O}\left(\frac{1}{t^{2} \beta(t) }\right) $ convergence rate. We also show that the feasibility violation and the objective residual enjoy the
    $ \mathcal{O}\left(\frac{1}{t \sqrt{\beta(t)} }\right) $ convergence rate.

\item[{\rm (iii)}] By appropriately adjusting these parameters, we show that the inertial accelerated primal-dual algorithm proposed in this paper exhibits the $ \mathcal{O}\left(\frac{1}{k^{2} \beta_{k} }\right) $ convergence rate for the primal-dual gap, the feasibility violation, and the objective residual.

\item[{\rm (iv)}] Through numerical experiments, we demonstrate that the inertial accelerated primal-dual algorithm generated by System (\ref{dyn}), controlled by time scaling during iterations, can help accelerate the convergence.
\end{itemize}

The rest of this paper is organized as follows. In Section 2,  we recall some basic notations and present some preliminary results. In Section 3, we  obtain the fast convergence rates of the primal-dual gap, the feasibility violation, and the objective residual along the trajectories generated by System (\ref{dyn}), and also give some integral estimate results.  In Section 4, we propose an inertial accelerated primal-dual algorithm for solving Problem (\ref{constrained}) and establish fast convergence rates for the primal-dual gap, the feasibility violation, and the objective residual. In Section 5, we present some numerical experiments to illustrate the obtained results.

\section{Preliminaries}
Unless otherwise specified,  let  $\mathcal{H}$ and $\mathcal{G}$ be real Hilbert spaces equipped  with inner product $ \langle \cdot, \cdot \rangle$ and norm $\| \cdot \|$. The norm of the Cartesian
 product $ \mathcal{H} \times \mathcal{G} $ is defined as
\begin{equation*}
\|(x,y)\|=\sqrt{\|x\|^2+\|y\|^2}, ~~~ \forall (x,y)\in\mathcal{H}\times\mathcal{G}.
\end{equation*}
For every $x_1,x_2\in \mathcal{H}$, the following equality holds:
\begin{equation}\label{o1}
\frac{1}{2}\|x_1\|^2-\frac{1}{2}\|x_2\|^2=\langle x_1,x_1-x_2 \rangle-\frac{1}{2} \|x_1-x_2\|^2.
\end{equation}

The Lagrangian function associated with Problem (\ref{constrained}) is defined as
\begin{equation*}\label{PD}
\mathcal{L}(x,\lambda):= f(x)+g(x)+\left \langle \lambda,Ax-b \right \rangle.
\end{equation*}
A pair $ (x^*,\lambda^*) \in \mathcal{H}\times \mathcal{G}$ is said to be a saddle point of the  Lagrangian function $ \mathcal{L} $ iff
\begin{equation*}
\mathcal{L}(x^*,\lambda)\leq \mathcal{L}(x^*,\lambda^*) \leq \mathcal{L}(x,\lambda^*),~~\forall (x,\lambda)\in\mathcal{H}\times\mathcal{G}.
\end{equation*}

For $\rho > 0 $, associated with the Lagrangian function $ \mathcal{L} $, we introduce the augmented Lagrangian function $ \mathcal{L}_\rho: \mathcal{H} \times \mathcal{G} \to \mathbb{R}$ defined by
\begin{equation}\label{asd}
\mathcal{L}_\rho(x,\lambda):= \mathcal{L}(x,\lambda)  + \frac{\rho}{2}\|Ax-b\|^2=f(x)+g(x)+\left \langle \lambda,Ax-b \right \rangle+\frac{\rho}{2}\|Ax-b\|^2.
\end{equation}
In the sequel, the set of saddle points of $\mathcal{L}_\rho$ is denoted by $\mathcal{S}$. The set of feasible points of Problem (\ref{constrained}) is denoted by $\mathcal{F}:=\{ x\in \mathcal{H} | A x=b \}$.  For any $(x,\lambda)\in \mathcal{F} \times \mathcal{G}$, it holds that $f(x)+g(x)=\mathcal{L}_{\rho}(x,\lambda)=\mathcal{L}(x,\lambda)$. We assume that $\mathcal{S}\neq\emptyset$. Let $(x^*,\lambda^*)\in\mathcal{S}$. Then,
\begin{equation*}
(x^*,\lambda^*)\in \mathcal{S} \Leftrightarrow
\left\{ \begin{split}
&0 \in \partial_x\mathcal{L}_{\rho}(x^*,\lambda^*)=\nabla f(x^*)+ \partial  g(x^*)+A^*\lambda^*,\\
&0=\nabla_\lambda\mathcal{L}_{\rho}(x^*,\lambda^*)=Ax^*-b,
\end{split}
\right.
\end{equation*}
where $A^*:\mathcal{G}\rightarrow\mathcal{H}$ denotes the adjoint operator of $A$.

\section{Fast Convergence Rates for Differential System \textup{(\ref{dyn})}}
In this section, by using the Lyapunov analysis, we establish  the fast convergence rates for the primal-dual gap, the feasibility violation, and the objective residual along the trajectory generated by System \textup{(\ref{dyn})} under mild assumptions on the parameters. Moreover, we also give some integral estimates.

\begin{theorem}\label{Th1}
Let $ (x ,\lambda ):\left[t_0, +\infty\right)\to \mathcal{H} \times \mathcal{G}$ be a solution of System $ (\ref{dyn})$.
Suppose that $\alpha \geq 3$, $\rho>0$ and $\sup\limits_{t\geq t_0}\frac{t\dot{\beta}(t)}{\beta(t)}\leq 3-\alpha$.
Then, for any $ (x^*,\lambda^*)\in \mathcal{S} $,
\begin{equation*}
\mathcal{L}_{\rho}(x(t),\lambda^*)-\mathcal{L}_{\rho}(x^*,\lambda^*)=\mathcal{O}\left(\frac{1}{t^{2}\beta(t)}\right),~\textup{as} ~  t\to +\infty ,
\end{equation*}
\begin{equation*}\resizebox{0.97\hsize}{!}{$
\|Ax(t)-b\|=\mathcal{O}\left( \frac{1}{ t \sqrt{\beta(t)}}\right)
\textup{ and }
|(f+g)(x(t))-(f+g)(x^*)|=\mathcal{O}\left( \frac{1}{ t \sqrt{\beta(t)}}\right),~\textup{as} ~ t\to +\infty,$}
\end{equation*}
\begin{equation*}
\int_{t_0}^{+\infty} t\beta(t)\left\| Ax(t)-b\right\|^2dt< +\infty,
\end{equation*}
and
\begin{equation*}
\int_{t_0}^{+\infty} \left((\alpha-3)t\beta(t)-t^2\dot{\beta}(t)\right)
\left( \mathcal{L}_\rho(x(t),\lambda^*)-\mathcal{L}_\rho(x^*,\lambda^*)\right) dt< +\infty.
\end{equation*}
\end{theorem}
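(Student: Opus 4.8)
The plan is to run a standard Lyapunov-function argument tailored to the second-order primal-dual system with time scaling. I would introduce the energy functional
\[
\mathcal{E}(t) := t^2\beta(t)\bigl(\mathcal{L}_\rho(x(t),\lambda^*)-\mathcal{L}_\rho(x^*,\lambda^*)\bigr)
+\tfrac12\bigl\|a(t)(x(t)-x^*)+t\dot{x}(t)\bigr\|^2
+\tfrac12\bigl\|a(t)(\lambda(t)-\lambda^*)+t\dot{\lambda}(t)\bigr\|^2,
\]
with $a(t)=\alpha-1$ (matching the extrapolation coefficient $\tfrac{t}{\alpha-1}$), possibly plus a lower-order correction term of the form $c\,\langle x(t)-x^*,\dot{x}(t)\rangle$ or a multiple of $\|x(t)-x^*\|^2$ to absorb stray terms. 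The first step is to differentiate $\mathcal{E}$ along the trajectory, substituting $\ddot{x}$ and $\ddot{\lambda}$ from System~(\ref{dyn}). The extrapolated arguments $\lambda(t)+\tfrac{t}{\alpha-1}\dot\lambda(t)$ and $x(t)+\tfrac{t}{\alpha-1}\dot x(t)$ are exactly what make the cross terms telescope: using convexity of $\mathcal{L}_\rho(\cdot,\lambda^*)$ in $x$ (with a subgradient inclusion, so I would pick $\xi(t)\in\partial_x\mathcal{L}_\rho(x(t),\lambda(t)+\tfrac{t}{\alpha-1}\dot\lambda(t))$ from the inclusion) and the linearity of $\mathcal{L}_\rho$ in $\lambda$, together with the fact that $(x^*,\lambda^*)$ is a saddle point so $\mathcal{L}_\rho(x^*,\lambda^*)\le\mathcal{L}_\rho(x(t),\lambda^*)$ and $\nabla_\lambda\mathcal{L}_\rho(x^*,\lambda^*)=Ax^*-b=0$.

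After simplification I expect to obtain
\[
\dot{\mathcal{E}}(t)\le -\bigl((\alpha-3)t\beta(t)-t^2\dot\beta(t)\bigr)\bigl(\mathcal{L}_\rho(x(t),\lambda^*)-\mathcal{L}_\rho(x^*,\lambda^*)\bigr)
- \rho\, t^2\beta(t)\|Ax(t)-b\|^2 + (\text{controlled remainder}),
\]
where the hypotheses $\alpha\ge3$ and $\sup_t t\dot\beta(t)/\beta(t)\le 3-\alpha$ guarantee the coefficient $(\alpha-3)t\beta(t)-t^2\dot\beta(t)$ is nonnegative (indeed $\ge(\alpha-3)t\beta(t)\ge0$ when combined appropriately), so $\mathcal{E}$ is nonincreasing up to the remainder. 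A subtle point: since $\mathcal{L}_\rho(x(t),\lambda^*)-\mathcal{L}_\rho(x^*,\lambda^*)=(f+g)(x(t))-(f+g)(x^*)+\langle\lambda^*,Ax(t)-b\rangle+\tfrac{\rho}{2}\|Ax(t)-b\|^2$ need not be sign-definite, I would keep the $\tfrac{\rho}{2}\|Ax-b\|^2$ piece explicit and use it to dominate the cross term $\langle\lambda^*,Ax-b\rangle$ via Young's inequality, thereby showing $\mathcal{L}_\rho(x(t),\lambda^*)-\mathcal{L}_\rho(x^*,\lambda^*)$ is in fact bounded below by a nonnegative quantity (or controlling its negative part), which is what makes $\mathcal{E}$ bounded. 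Once $\mathcal{E}(t)\le\mathcal{E}(t_0)+\text{const}$ is established, the first convergence rate $\mathcal{L}_\rho(x(t),\lambda^*)-\mathcal{L}_\rho(x^*,\lambda^*)=\mathcal{O}(1/(t^2\beta(t)))$ follows by dividing by $t^2\beta(t)$, and boundedness of $\mathcal{E}$ also yields $\|t\dot\lambda(t)+(\alpha-1)(\lambda(t)-\lambda^*)\|=\mathcal{O}(1)$.

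For the feasibility violation I would use the standard trick: the boundedness of $\|a(t)(\lambda(t)-\lambda^*)+t\dot\lambda(t)\|$ together with the $\lambda$-equation and an auxiliary estimate on $\|\lambda(t)-\lambda^*\|$ (obtained from a separate, simpler Lyapunov or Gronwall argument, or from integrating the second system equation) gives control on the dual trajectory; then evaluating the primal-dual gap estimate at a point gives $|\langle\lambda^*,Ax(t)-b\rangle|\le C/(t\sqrt{\beta(t)})$-type bounds, and combining with the explicit $\tfrac{\rho}{2}\|Ax-b\|^2$ term bound $\|Ax(t)-b\|^2=\mathcal{O}(1/(t^2\beta(t)))$ (read off from $\mathcal{E}$ if that term is retained, or from the primal-dual gap decomposition) yields $\|Ax(t)-b\|=\mathcal{O}(1/(t\sqrt{\beta(t)}))$. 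The objective residual bound then follows from $(f+g)(x(t))-(f+g)(x^*)=\bigl(\mathcal{L}_\rho(x(t),\lambda^*)-\mathcal{L}_\rho(x^*,\lambda^*)\bigr)-\langle\lambda^*,Ax(t)-b\rangle-\tfrac{\rho}{2}\|Ax(t)-b\|^2$, with each piece already estimated, giving two-sided control and hence the $\mathcal{O}(1/(t\sqrt{\beta(t)}))$ rate on the absolute value. Finally, the two integral estimates come for free: integrating $\dot{\mathcal{E}}(t)\le -\rho t^2\beta(t)\|Ax(t)-b\|^2 - ((\alpha-3)t\beta(t)-t^2\dot\beta(t))(\mathcal{L}_\rho(x(t),\lambda^*)-\mathcal{L}_\rho(x^*,\lambda^*)) + (\text{integrable remainder})$ from $t_0$ to $+\infty$ and using $\mathcal{E}\ge0$ (after the lower-bound fix) bounds both $\int t^2\beta(t)\|Ax-b\|^2\,dt$ and $\int((\alpha-3)t\beta(t)-t^2\dot\beta(t))(\mathcal{L}_\rho(x,\lambda^*)-\mathcal{L}_\rho(x^*,\lambda^*))\,dt$; the stated $\int t\beta(t)\|Ax-b\|^2\,dt<\infty$ is weaker than the $t^2$ version and follows a fortiori (or arises directly if a slightly different Lyapunov term is used). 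The main obstacle I anticipate is the bookkeeping in the derivative computation — getting all the cross terms between the $\dot x$, $\dot\lambda$ velocity terms and the extrapolated subgradient/gradient terms to cancel correctly — and the care needed in handling the subdifferential of $g$ (selecting a measurable selection $\xi(t)$ and invoking the subgradient inequality rather than a gradient identity), plus ensuring the non-sign-definite primal-dual gap is properly bounded below so that the Lyapunov estimate actually closes.
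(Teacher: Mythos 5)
Your proposal follows the same Lyapunov route as the paper: the energy you write down is, up to the constant factor $(\alpha-1)^2$, exactly the paper's
\[
\mathcal{E}(t)=\tfrac{t^2\beta(t)}{(\alpha-1)^2}\bigl(\mathcal{L}_\rho(x(t),\lambda^*)-\mathcal{L}_\rho(x^*,\lambda^*)\bigr)+\tfrac12\bigl\|x(t)-x^*+\tfrac{t}{\alpha-1}\dot x(t)\bigr\|^2+\tfrac12\bigl\|\lambda(t)-\lambda^*+\tfrac{t}{\alpha-1}\dot\lambda(t)\bigr\|^2,
\]
no correction term is needed, and the cross terms cancel exactly as you predict, giving $\dot{\mathcal{E}}(t)\le\frac{(3-\alpha)t\beta(t)+t^2\dot\beta(t)}{(\alpha-1)^2}(\mathcal{L}_\rho(x,\lambda^*)-\mathcal{L}_\rho(x^*,\lambda^*))-\frac{\rho t\beta(t)}{2(\alpha-1)}\|Ax-b\|^2$. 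The one genuine methodological difference is your treatment of the nonsmooth term: you differentiate the energy directly along the inclusion via a (measurable) subgradient selection, whereas the paper replaces $g$ by its Moreau--Yosida regularization $g_\gamma$, runs the whole computation for the smooth surrogate system, and passes to the limit $\gamma\to0$ at the end; your route is more direct but has to justify differentiating $t\mapsto g(x(t))$ with the chosen selection, while the paper's sidesteps that at the cost of a limiting argument. Two corrections to your sketch. First, the dissipation coefficient on $\|Ax-b\|^2$ is $t\beta(t)$, not $t^2\beta(t)$ (it arises from the $-\frac{\rho t\beta}{\alpha-1}\|Ax-b\|^2$ term combined with half of it being used to pass from $\langle\nabla f+\xi, x-x^*\rangle$-type terms to the Lagrangian gap), which is precisely why the theorem asserts only $\int t\beta\|Ax-b\|^2\,dt<+\infty$; your stronger $t^2\beta$ claim does not come out of this computation. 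Second, your worry about the sign of $\mathcal{L}_\rho(x(t),\lambda^*)-\mathcal{L}_\rho(x^*,\lambda^*)$ and the proposed Young-inequality patch are both unnecessary and, as stated, would not work: the saddle-point inequality $\mathcal{L}(x,\lambda^*)\ge\mathcal{L}(x^*,\lambda^*)$ already gives $\mathcal{L}_\rho(x,\lambda^*)-\mathcal{L}_\rho(x^*,\lambda^*)\ge\frac{\rho}{2}\|Ax-b\|^2\ge0$, which simultaneously makes $\mathcal{E}$ a valid nonnegative Lyapunov function and delivers $\|Ax(t)-b\|=\mathcal{O}(1/(t\sqrt{\beta(t)}))$ in one line, with no need for the auxiliary dual-trajectory estimates you describe.
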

\begin{proof}

From the Moreau-Yosida regularization \cite[7.c]{m1965}, the smoothing approximation $g_{\gamma} (x)$ of $g(x)$ is defined by
$$
g_{\gamma} (x):=\inf_{u\in \mathcal{H}} \left\{ g(u)+\frac{1}{2\gamma}\|x-u\|^2 \right\}.
$$
Obviously, $g_{\gamma} (x)$ is continuously differentiable and $\nabla g_{\gamma} (x)$ is Lipschitz continuous. Now, we consider the following system, which is consistent with System (\ref{dyn}),
\begin{equation}\label{w1}
\left\{\begin{split}
&\ddot{x}_{\gamma}(t)+\frac{\alpha}{t}\dot{x}_{\gamma}(t)+\beta(t)\Big(\nabla f\left(x_{\gamma}(t)\right)+\nabla g_{\gamma}\left(x_{\gamma}(t)\right)\\
&~~~~+A^*\left(\lambda_{\gamma}(t)+\frac{t}{\alpha-1} \dot{\lambda}_{\gamma}(t)\right)+\rho A^*\left(Ax_{\gamma}(t)-b\right)\Big)= 0,\\
&\ddot{\lambda}_{\gamma}(t)+\frac{\alpha}{t}\dot{\lambda}_{\gamma}(t)-\beta(t)\left(A \left(x_{\gamma}(t)+\frac{t}{\alpha-1} \dot{x}_{\gamma}(t)\right)-b\right)=0.
\end{split}
\right.
\end{equation}
For any fixed $ (x^*,\lambda^*)\in \mathcal{S} $, we introduce the energy function $ \mathcal{E}_{\gamma}:\left[t_0,+\infty \right) \to \mathbb{R} $ which defined as
\begin{equation}\label{def1}
\begin{aligned}
\mathcal{E}_{\gamma}(t)=&\frac{t^2 \beta(t)}{(\alpha-1)^2} \left(\mathcal{L}_{\rho}(x_{\gamma}(t),\lambda^*)-\mathcal{L}_{\rho}(x^*,\lambda^*)\right)\\
&+\frac{1}{2}\lVert x_{\gamma}(t)-x^*+\frac{t}{\alpha-1}\dot{x}_{\gamma}(t)\rVert^2
+\frac{1}{2}\lVert \lambda_{\gamma}(t)-\lambda^*+\frac{t}{\alpha-1}\dot{\lambda}_{\gamma}(t)\rVert^2.
\end{aligned}
\end{equation}
Clearly, $ \mathcal{E}_{\gamma}(t) \geq0 $  for all $t\geq t_0 $.
Then,
\begin{equation*}\label{a1}{
\begin{split}
\dot{\mathcal{E}}_{\gamma}(t)
=&\left(\frac{2t\beta(t)}{(\alpha-1)^2}+\frac{t^2\dot{\beta}(t)}{(\alpha-1)^2}\right)\left(\mathcal{L}_{\rho} \left(x_{\gamma}(t),\lambda^*\right)-\mathcal{L}_{\rho}(x^*,\lambda^*)\right)\\
&+\frac{t^2\beta(t)}{(\alpha-1)^2}\langle \nabla_x \mathcal{L}_{\rho}(x_{\gamma}(t),\lambda^*),\dot{x}_{\gamma}(t)  \rangle
\\
&+\left\langle x_{\gamma}(t)-x^*+\frac{t}{\alpha-1} \dot{x}_{\gamma}(t),\frac{\alpha}{\alpha-1}\dot{x}_{\gamma}(t)+\frac{t}{\alpha-1} \ddot{x}_{\gamma}(t) \right\rangle\\
&+\left\langle \lambda_{\gamma}(t)-\lambda^*+\frac{t}{\alpha-1} \dot{\lambda}_{\gamma}(t),\frac{\alpha}{\alpha-1}\dot{\lambda}_{\gamma}(t)+\frac{t}{\alpha-1} \ddot{\lambda}_{\gamma}(t) \right\rangle\\
=&\left(\frac{2t\beta(t)}{(\alpha-1)^2}+\frac{t^2\dot{\beta}(t)}{(\alpha-1)^2}\right)\left(\mathcal{L}_{\rho}\left(x_{\gamma}(t),\lambda^*\right)-\mathcal{L}_{\rho}(x^*,\lambda^*)\right)\\
&+\frac{t^2\beta(t)}{(\alpha-1)^2}\left\langle \nabla f\left(x_{\gamma}(t)\right)+\nabla g_{\gamma}\left(x_{\gamma}(t)\right),\dot{x}_{\gamma}(t)  \right\rangle
+\frac{t^2\beta(t)}{(\alpha-1)^2}\left\langle \lambda^* , A \dot{x}_{\gamma}(t) \right\rangle\\
&+ \frac{\rho t^2\beta(t)}{(\alpha-1)^2}\left\langle A x_{\gamma}(t)-b, A \dot{x}_{\gamma}(t)  \right\rangle\\
&+ \left\langle x_{\gamma}(t)-x^*+ \frac{t}{\alpha-1} \dot{x}_{\gamma}(t),\frac{\alpha}{\alpha-1}\dot{x}_{\gamma}(t)+\frac{t}{\alpha-1} \ddot{x}_{\gamma}(t)\right\rangle\\
&+\left\langle \lambda_{\gamma}(t)-\lambda^*+\frac{t}{\alpha-1} \dot{\lambda}_{\gamma}(t),\frac{\alpha}{\alpha-1}\dot{\lambda}_{\gamma}(t)+\frac{t}{\alpha-1} \ddot{\lambda}_{\gamma}(t) \right\rangle.
\end{split}}
\end{equation*}
Note that
\begin{equation*}
\begin{aligned}
&\left\langle x_{\gamma}(t)-x^*+\frac{t}{\alpha-1} \dot{x}_{\gamma}(t),\frac{\alpha}{\alpha-1}\dot{x}_{\gamma}(t)+\frac{t}{\alpha-1} \ddot{x}_{\gamma}(t) \right\rangle\\
=&-\frac{t\beta(t)}{\alpha-1}\left\langle x_{\gamma}(t)-x^*+\frac{t}{\alpha-1}\dot{x}_{\gamma}(t),
\nabla f\left(x_{\gamma}(t)\right)+\nabla g_{\gamma}\left(x_{\gamma}(t)\right) \right\rangle\\
&-\frac{t\beta(t)}{\alpha-1}\left\langle x_{\gamma}(t)-x^*+\frac{t}{\alpha-1}\dot{x}_{\gamma}(t),A^* \left( \lambda_{\gamma}(t) + \frac{t}{\alpha-1} \dot{\lambda}_{\gamma}(t)  \right)+\rho A^* \left( A x_{\gamma}(t)-b \right) \right\rangle\\
=&-\frac{t\beta(t)}{\alpha-1}\left\langle x_{\gamma}(t)-x^*+\frac{t}{\alpha-1} \dot{x}_{\gamma}(t), \nabla f\left(x_{\gamma}(t)\right)+\nabla g_{\gamma}\left(x_{\gamma}(t)\right) \right\rangle\\
&-\frac{t\beta(t)}{\alpha-1}\left\langle A \left(x_{\gamma}(t)+\frac{t}{\alpha-1} \dot{x}_{\gamma}(t)\right)-b,\lambda_{\gamma}(t)+\frac{t}{\alpha-1} \dot{\lambda}_{\gamma}(t) \right\rangle \\
&- \frac{\rho t\beta(t)}{\alpha-1}\left\langle  A \left( x_{\gamma}(t)+\frac{t}{\alpha-1} \dot{x}_{\gamma}(t) \right)-b , A x_{\gamma}(t)-b \right\rangle,
\end{aligned}
\end{equation*}
and
\begin{equation*}
\begin{aligned}
&\left\langle \lambda_{\gamma}(t)-\lambda^*+\frac{t}{\alpha-1} \dot{\lambda}_{\gamma}(t),\frac{\alpha}{\alpha-1}\dot{\lambda}_{\gamma}(t)+\frac{t}{\alpha-1} \ddot{\lambda}_{\gamma}(t) \right\rangle\\
=&\frac{t\beta(t)}{\alpha-1}\left\langle \lambda_{\gamma}(t)-\lambda^*+\frac{t}{\alpha-1} \dot{\lambda}_{\gamma}(t),A\left(x_{\gamma}(t)+\frac{t}{\alpha-1} \dot{x}_{\gamma}(t)\right)-b \right\rangle.
\end{aligned}
\end{equation*}
Then,
\begin{equation}\label{a2}
\begin{aligned}
\dot{\mathcal{E}}_{\gamma}(t)=&\left(\frac{2t\beta(t)}{(\alpha-1)^2}+\frac{t^2\dot{\beta}(t)}{(\alpha-1)^2}\right)
\left(\mathcal{L}_{\rho}(x_{\gamma}(t),\lambda^*)-\mathcal{L}_{\rho}(x^*,\lambda^*)\right)\\
&-\frac{t\beta(t)}{\alpha-1}\left\langle x_{\gamma}(t)-x^*,\nabla f\left(x_{\gamma}(t)\right)+\nabla g_{\gamma}\left(x_{\gamma}(t)\right) \right\rangle\\
&-\frac{t\beta(t)}{\alpha-1}\left\langle \lambda^*, A x_{\gamma}(t)-b \right\rangle- \frac{\rho t\beta(t)}{\alpha-1}\left\| Ax_{\gamma}(t)-b \right\|^2\\
\leq & \left(\frac{2t\beta(t)}{(\alpha-1)^2}+\frac{t^2\dot{\beta}(t)}{(\alpha-1)^2}\right)\left(\mathcal{L}_{\rho}(x_{\gamma}(t),\lambda^*)-\mathcal{L}_{\rho}(x^*,\lambda^*)\right)\\
&-\frac{t\beta(t)}{\alpha-1}\left( f(x_{\gamma}(t))-f(x^*) \right)-\frac{t\beta(t)}{\alpha-1}\left( g_{\gamma}(x_{\gamma}(t))-g_{\gamma}(x^*) \right)\\
&-\frac{t\beta(t)}{\alpha-1}\left\langle \lambda^*, A x_{\gamma}(t)-b \right\rangle- \frac{\rho t\beta(t)}{\alpha-1}\left\| Ax_{\gamma}(t)-b \right\|^2\\
=&\left(\frac{(3-\alpha)t\beta(t)}{(\alpha-1)^2}+\frac{t^2\dot{\beta}(t)}{(\alpha-1)^2}\right)
\left(\mathcal{L}_{\rho}(x_{\gamma}(t),\lambda^*)-\mathcal{L}_{\rho}(x^*,\lambda^*)\right)\\
&- \frac{\rho t\beta(t)}{2(\alpha-1)} \left\| Ax_{\gamma}(t)-b \right\|^2,
\end{aligned}
\end{equation}
where the inequality holds due to the convexity of $f$ and $g_{\gamma}$.
Since  $\sup\limits_{t\geq t_0}\frac{t\dot{\beta}(t)}{\beta(t)}\leq \alpha-3$, we have $\frac{(3-\alpha)t\beta(t)}{(\alpha-1)^2}+\frac{t^2\dot{\beta}(t)}{(\alpha-1)^2}\leq 0$   for all $t\geq t_0$. Then, $\dot{\mathcal{E}}_{\gamma}(t)\leq 0$   for all $t\geq t_0$. This means that $\mathcal{E}_{\gamma}(t)\leq\mathcal{E}_{\gamma}(t_0)$    for all $t\geq t_0$.
For any $t\geq t_0$, it follows from integrating (\ref{a2}) from $t_0$ to $t$ that
\begin{equation*}
\begin{aligned}
&\mathcal{E}_{\gamma}(t)+\int_{t_0}^t\left(\frac{(\alpha-3)s\beta(s)}{(\alpha-1)^2}-\frac{s^2\dot{\beta}(s)}{(\alpha-1)^2}\right)
\left( {\cal L}_\rho(x_{\gamma}(s),\lambda^*)-\mathcal{L}_\rho(x^*,\lambda^*)\right) ds\\
&+\int_{t_0}^t \frac{\rho s\beta(s)}{2(\alpha-1)}
\left\| Ax_{\gamma}(s)-b\right\|^2ds\leq\mathcal{E}_{\gamma}(t_0).
\end{aligned}
\end{equation*}
Thus,
\begin{eqnarray*}
\int_{t_0}^{+\infty} \left((\alpha-3)t\beta(t)-t^2\dot{\beta}(t)\right)
\left(\mathcal{L}_\rho(x_{\gamma}(t),\lambda^*)-\mathcal{L}_\rho(x^*,\lambda^*)\right) dt \leq \mathcal{E}_{\gamma}(t_0)\leq +\infty,\\
\int_{t_0}^{+\infty} t\beta(t)\left\| Ax_{\gamma}(t)-b\right\|^2dt \leq \mathcal{E}_{\gamma}(t_0)< +\infty.
\end{eqnarray*}
Moreover, by (\ref{def1}), we have
\begin{equation*}
t^2\beta(t)\left(\mathcal{L}_{\rho}(x_{\gamma}(t),\lambda^*)-\mathcal{L}_{\rho}(x^*,\lambda^*)\right)\leq \mathcal{E}_{\gamma}(t) \leq \mathcal{E}_{\gamma}(t_0), ~\forall ~ t\geq t_0.
\end{equation*}
This implies
\begin{equation*}
\mathcal{L}_{\rho}(x_{\gamma}(t),\lambda^*)-\mathcal{L}_{\rho}(x^*,\lambda^*)=\mathcal{O}\left( \frac{1}{ t^2 \beta(t)}\right),~\textup{as} ~ t\to +\infty.
\end{equation*}
Note that
$$
\mathcal{L}_{\rho}(x_{\gamma}(t),\lambda^*)-\mathcal{L}_{\rho}(x^*,\lambda^*)
=\mathcal{L}(x_{\gamma}(t),\lambda^*)-\mathcal{L}(x^*,\lambda^*)+\frac{\rho}{2}\|Ax_{\gamma}(t)-b\|^2.
$$
Then,
\begin{equation*}
\|Ax_{\gamma}(t)-b\|=\mathcal{O}\left( \frac{1}{ t \sqrt{\beta(t)}}\right), ~\textup{as} ~t\rightarrow +\infty.
\end{equation*}
Combining $\rho >0$ and the definition of $\mathcal{L}_{\rho}$, we obtain
\begin{equation*}
\begin{split}
&|(f+g)(x_{\gamma}(t))-(f+g)(x^*)|\\
\leq&
\mathcal{L}_{\rho}(x_{\gamma}(t),\lambda^*)-\mathcal{L}_{\rho}(x^*,\lambda^*)+|\langle \lambda^*, Ax_{\gamma}(t)-b\rangle |+\frac{\rho}{2}\|Ax_{\gamma}(t)-b\|^2\\
\leq& \mathcal{L}_{\rho}(x_{\gamma}(t),\lambda^*)-\mathcal{L}_{\rho}(x^*,\lambda^*)
+\|\lambda^*\|\|Ax_{\gamma}(t)-b\|+\frac{\rho}{2}\|Ax_{\gamma}(t)-b\|^2.
\end{split}
\end{equation*}
Then,
$
|(f+g)(x_{\gamma}(t))-(f+g)(x^*)|=\mathcal{O}\left( \frac{1}{ t \sqrt{\beta(t)}}\right), ~\textup{as} ~t\to +\infty.
$
By the properties of the Moreau-Yosida regularization reported  in \cite[Section 2]{L1997}, there exists a subsequence $\left\{ \left( x_{\gamma}(t), {\lambda}_{\gamma}(t) \right) \right\}_{\gamma >0}$ of solution of System (\ref{w1}) that converges to the
solution of System (\ref{dyn}). Thus, we obtain the desired results by passing to the limit as $\gamma\rightarrow 0$.
\qed
\end{proof}

\begin{remark}
Note that Zeng et al. \cite{zeng2023} introduced a second-order dynamical system with slow vanishing damping for solving Problem (\ref{non}). In \cite[Theorem 3.1]{zeng2023}, they obtained the $\mathcal{O}\left(\frac{1}{t^2}\right)$ convergence rate for the primal-dual gap  along the trajectory generated by the system.
Therefore, Theorem \ref{Th1} extends \cite[Theorem 3.1]{zeng2023} from dynamical systems with slow vanishing damping to those incorporating both slow vanishing damping and time scaling, thereby achieving a faster convergence rate for the primal-dual gap along the trajectory generated by System (\ref{dyn}).
\end{remark}

\section{An Inertial Accelerated Primal-dual Algorithm}
In this section, we propose an inertial accelerated primal-dual algorithm and analyze the convergence properties of this algorithm when scaling coefficient satisfies certain conditions.
First, System (\ref{dyn}) can be written as:
\begin{equation}\label{form}
\left\{
\begin{split}
&\ddot{x}(t)+\frac{\alpha}{t}\dot{x}(t)+\beta(t)\Bigg(\nabla f(x(t))+\partial g(x(t))\\
&~~~~~~~~~~~~~~+A^*\left(\lambda(t)+\frac{t}{\alpha-1} \dot{\lambda}(t)\right)+\rho A^*\left(Ax(t)-b\right)\Bigg)\ni 0,\\
&\ddot{\lambda}(t)+\frac{\alpha}{t}\dot{\lambda}(t)-\beta(t)\left(A \left(x(t)+\frac{t}{\alpha-1}\dot{x}(t) \right)-b\right)=0.
\end{split}
\right.
\end{equation}
In order to provide a reasonable time discretization of the system (\ref{form}), we follow the techniques described in \cite{a2018,hehu2022,bot2023} and let
\begin{equation*}
\left\{
\begin{split}
&u(t):=x(t)+\frac{t}{\alpha-1} \dot{x}(t),\\
&v(t):=\lambda(t)+\frac{t}{\alpha-1} \dot{\lambda}(t).
\end{split}
\right.
\end{equation*}
Then, (\ref{form}) can be reformulated as:
\begin{equation}\label{dis}
\left\{
\begin{split}
&u(t)=x(t)+\frac{t}{\alpha-1} \dot{x}(t),\\
&\dot{u}(t)\in -\frac{t}{\alpha-1} \beta(t) \left(\nabla f(x(t))+\partial g(x(t))\right)-\frac{t}{ \alpha-1 }\beta(t) A^* v(t)\\
&~~~~~~~~-\frac{t}{\alpha-1} \rho \beta(t) A^*(Ax(t)-b),\\
&v(t)=\lambda(t)+\frac{t}{\alpha-1} \dot{\lambda}(t),\\
&\dot{v}(t)=\frac{t}{\alpha-1}\beta(t)\left( A u(t)-b \right).
\end{split}
\right.
\end{equation}
For the first two lines of (\ref{dis}), we approximate $x(k+1)\approx x_{k+1}$, $u(k+1)\approx u_{k+1}$, $v(k+1)\approx v_{k+1}$, and $\beta(k)\approx \beta_k$. Applying the implicit finite-difference scheme for the first two lines of (\ref{dis}) at time $t: =k+1$ for $(x,u,v)$ and at time $t: =k$ for $\beta$, it follows that for each $k \geq 1$,
\begin{equation}\label{a4}
\left\{
\begin{split}
&u_{k+1}=x_{k+1}+\frac{k}{\alpha-1} (x_{k+1}-x_k),\\
&\widetilde{u}_{k+1}-u_k\in -\frac{k}{\alpha-1} \beta_k \left(\nabla f(\widetilde{x}_{k+1})+\partial g(x_{k+1})\right)-\frac{k}{\alpha-1}\beta_k A^* v_{k+1}\\
&~~~~~~~~~~~~~~~~~~-\frac{k}{\alpha-1} \rho \beta_k A^*(Ax_{k+1}-b),
\end{split}
\right.
\end{equation}
where $u_{k+1}$ and $\nabla f(x_{k+1})$  are replaced, respectively, by appropriate terms  $\widetilde{u}_{k+1}$ and $\nabla f(\widetilde{x}_{k+1})$ to obtain an executable iterative scheme. Similar to the setting in \cite{bot2023,ding2024}, let $\widetilde{u}_{k+1}:=u_{k+1}-\frac{\alpha-1}{k+\alpha-1}(u_{k+1}-u_k)$ and $\widetilde{x}_{k+1}:=x_k+\frac{k-1}{k+\alpha-1}(x_k-x_{k-1})$. Then, we can reformulate (\ref{a4}) as:
\begin{equation*}
\left\{
\begin{split}
&u_{k+1}=x_{k+1}+\frac{k}{\alpha-1} (x_{k+1}-x_k),\\
&u_{k+1}-u_k\in -\frac{k+\alpha-1}{\alpha-1} \beta_k \left(\nabla f(\widetilde{x}_{k+1})+\partial g(x_{k+1})\right)-\frac{k+\alpha-1}{\alpha-1}\beta_k A^* v_{k+1}\\
&~~~~~~~~~~~~~~~~-\frac{k+\alpha-1}{\alpha-1} \rho \beta_k A^*(Ax_{k+1}-b).
\end{split}
\right.
\end{equation*}

Let $\sigma>0$. For the last two lines of (\ref{dis}), we consider the time step $\sigma_k:=\sigma \left( 1+ \frac{\alpha-1}{k} \right)$ and set $\tau_k:=\sqrt{\sigma_k}k\approx \sqrt{\sigma}(k+1)$, $\lambda(\tau_k)\approx \lambda_{k+1}$,  $v(\tau_k)\approx v_{k+1}$, $u(\tau_k)\approx u_{k+1}$ and $\beta(k)\approx \beta_k$.  Evaluating the last two lines of (\ref{dis})  at time $t:=\tau_k$ for $(\lambda,v,u)$ and at time $t: =k$ for $\beta$ yields the following for each $k \geq 1$,
\begin{equation}\label{a5}
\left\{
\begin{split}
v_{k+1}=\lambda_{k+1}+\frac{\sqrt{\sigma_k}k}{\alpha-1} \frac{\lambda_{k+1}-\lambda_k}{\sqrt{\sigma_k}},\\
\frac{v_{k+1}-v_k}{\sqrt{\sigma_k}} = \frac{\sqrt{\sigma_k}k}{\alpha-1} \beta_k (Au_{k+1}-b).
\end{split}
\right.
\end{equation}
From the first equality in (\ref{a5}), we have $v_{k+1}-v_k=\frac{k+\alpha-1}{\alpha-1} \left(\lambda_{k+1}-\mu_k\right)$, where $\mu_k: =\lambda_k+\frac{k-1}{k+\alpha-1}(\lambda_k-\lambda_{k-1}) $. Furthermore, we use the following change of variables for $\{t_k\}_{k\geq 1}$ as found in \cite{attouch2018,bot2023}:
\begin{equation*}\label{r1}
t_k:=1+\frac{k-1}{\alpha-1}=\frac{k+\alpha-2}{\alpha-1},~\forall k\geq 1.
\end{equation*}
Clearly,  $t_{k+1}-1=\frac{k}{\alpha-1}$, $\frac{t_k-1}{t_{k+1}}=\frac{k-1}{k+\alpha-1}$, and $\widetilde{x}_{k+1}=x_k+\frac{t_k-1}{t_{k+1}}(x_k-x_{k-1})$. Together with (\ref{dis}), (\ref{a4}) and (\ref{a5}), we obtain the following discretization scheme of (\ref{form}):
\begin{equation}\label{aa1}
\left\{
\begin{split}
&u_{k+1}=x_{k+1}+(t_{k+1}-1) (x_{k+1}-x_k),\\
&u_{k+1}-u_k\in -t_{k+1} \beta_k \left(\nabla f(\widetilde{x}_{k+1})+\partial g(x_{k+1})\right)-t_{k+1}\beta_k A^* v_{k+1}\\
&~~~~~~~~~~~~~~~~-\rho t_{k+1} \beta_k A^*(Ax_{k+1}-b),\\
&v_{k+1}=\lambda_{k+1}+(t_{k+1}-1) (\lambda_{k+1}-\lambda_k),\\
&\mu_k=\lambda_k+\frac{t_k-1}{t_{k+1}}(\lambda_k-\lambda_{k-1}),\\
&\lambda_{k+1}=\mu_k+\sigma\beta_k (Au_{k+1}-b).
\end{split}
\right.
\end{equation}
From (\ref{aa1}), we obtain
\begin{equation*}
\begin{split}
 v_{k+1} &= t_{k+1}\lambda_{k+1}-(t_{k+1}-1) \lambda_k\\
&= t_{k+1}\mu_k+\sigma t_{k+1}\beta_k (A u_{k+1} -b ) -(t_{k+1}-1)  \lambda_k\\
&=t_{k+1}\mu_k + \sigma t_{k+1}\beta_k \left( t_{k+1} A x_{k+1}- (t_{k+1}-1) A x_k-b  \right)-(t_{k+1}-1) \lambda_k    \\
&= t_{k+1}\mu_k -(t_{k+1}-1) \lambda_k + \sigma \beta_k t_{k+1}^2 \left( Ax_{k+1}-\frac{1}{t_{k+1}}  \left((t_{k+1}-1)Ax_k +b \right) \right).
\end{split}
\end{equation*}
Let $\xi_{k+1}:=t_{k+1}\mu_k -(t_{k+1}-1)  \lambda_k$, $s_{k+1}:=\sigma \beta_k t_{k+1}^2$ and $\phi_{k+1}:=\frac{1}{t_{k+1}} \left((t_{k+1}-1)Ax_k +b \right)$. Then,
\begin{equation}\label{r3}
v_{k+1} = \xi_{k+1} + s_{k+1}  \left(Ax_{k+1}- \phi_{k+1} \right).
\end{equation}
Note that
\begin{equation*}\label{r4}
\begin{aligned}
u_{k+1}-u_k &=x_{k+1}+(t_{k+1}-1)(x_{k+1}-x_k)-x_k-(t_{k}-1)(x_k-x_{k-1})\\
&=t_{k+1}(x_{k+1}-x_k)-(t_{k}-1)(x_k-x_{k-1}).
\end{aligned}
\end{equation*}
Together with  (\ref{r3}) and  the second line of (\ref{aa1}), we have
\begin{equation*}
\begin{aligned}
0\in & (t_{k}-1)(x_k-x_{k-1})- t_{k+1}(x_{k+1}-x_k)-t_{k+1} \beta_k \left(\nabla f(\widetilde{x}_{k+1})+\partial g(x_{k+1})\right)\\
&-t_{k+1}\beta_k \left( A^* \xi_{k+1} +s_{k+1} A^* (Ax_{k+1}-\phi_{k+1}) \right)
- \rho t_{k+1} \beta_k A^* (Ax_{k+1}-b).
\end{aligned}
\end{equation*}
Then,
\begin{equation*}
\begin{aligned}
0\in & \frac{1}{ \beta_k } \left( x_{k+1}-x_k -\frac{t_{k}-1}{t_{k+1}} (x_k-x_{k-1}) \right) +  \nabla f(\widetilde{x}_{k+1})+\partial g(x_{k+1})\\
&+A^* \xi_{k+1} + \left(  s_{k+1}+ \rho \right) A^* A x_{k+1} - s_{k+1} A^* \phi_{k+1} - \rho A^* b.
\end{aligned}
\end{equation*}
This implies
\begin{equation*}
\begin{aligned}
x_{k+1}&=\mathop{\arg\min}\limits_{x\in \mathcal{H}} \Bigg\{ \langle \nabla f(\widetilde{x}_{k+1}),x \rangle+g(x)+\frac{1}{2 \beta_k }\| x-\widetilde{x}_{k+1} \|^2\\
&~~~~~~~~~~~~~~~+\frac{ \zeta_{k+1} }{2} \left\| Ax- \frac{1}{\zeta_{k+1}} \left(  s_{k+1} \phi_{k+1} + \rho b - \xi_{k+1} \right) \right\|^2 \Bigg\},
\end{aligned}
\end{equation*}
where $\zeta_{k+1}:=  s_{k+1} + \rho $.

Based on the above analysis, we are now in the position to introduce the following inertial accelerated primal-dual algorithm for solving Problem (\ref{constrained}).

\begin{algorithm}[H]
\caption{Inertial Accelerated Primal-Dual Algorithm (IAPDA)}
\textbf{Initialization:} Choose  $x_0 = x_1\in \mathcal{H} \textup{ and } \lambda_0 = \lambda_1\in \mathcal{G}$. Let $\rho> 0$, $\sigma> 0$, $\beta_0> 0$ and
  \begin{equation}\label{q0}
  \beta_{k-1} \leq \beta_k \leq  \frac{t_{k}^2}{t_{k + 1}(t_{k + 1}-1)}\beta_{k-1}, ~\forall k \geq 1.
  \end{equation}
  Let $t_1:=1$ and let $\{t_k\}_{k\geq1}$ be a nondecreasing sequence such that
  \begin{equation}\label{q1}
  t_{k + 1}^2 - t_{k + 1} - t_k^2 \leq 0, ~\forall k \geq 1.
  \end{equation}
\begin{algorithmic}
\For{$k = 1, 2, \dots$}

    \State \textbf{Step 1:} Compute
  \begin{equation}\label{q2}
  \bar{x}_k :=x_k+\frac{t_k -1}{t_{k+1}}(x_k-x_{k-1}),
  \end{equation}
  \begin{equation}\label{q4}
  \zeta_{k+1} :=  s_{k+1} + \rho,
  \end{equation}
  \begin{equation}\label{q3}
  s_{k+1} :=\sigma \beta_k t_{k+1}^2,
  \end{equation}
  \begin{equation}\label{q5}
  \phi_{k+1} :=\frac{1}{t_{k+1}} \left((t_{k+1}-1)Ax_k +b \right),
  \end{equation}
  \begin{equation}\label{q6}
  \mu_k :=\lambda_k+\frac{t_k-1}{t_{k+1}}(\lambda_k-\lambda_{k-1}),
  \end{equation}
  \begin{equation}\label{q7}
  \xi_{k+1} :=t_{k+1}\mu_k -(t_{k+1}-1)  \lambda_k.
  \end{equation}

    \State \textbf{Step 2:} Update the primal variable
  \begin{equation}\label{q8}\small
  \begin{aligned}
  x_{k+1}=&\mathop{\arg\min}\limits_{x\in \mathcal{H}} \Bigg\{ \langle \nabla f(\bar{x}_k),x \rangle+g(x)+\frac{1}{2 \beta_k }\| x-\bar{x}_k \|^2 \\
  &~~~~~~~~~~~+\frac{ \zeta_{k+1} }{2} \left\| Ax- \frac{1}{\zeta_{k+1}} \left( s_{k+1} \phi_{k+1} + \rho b - \xi_{k+1} \right) \right\|^2 \Bigg\}.
  \end{aligned}
  \end{equation}

    \State \textbf{Step 3:} Compute
  \begin{equation}\label{q9}
  u_{k+1}=x_{k+1}+(t_{k+1}-1) (x_{k+1}-x_k),
  \end{equation}
  and update the dual variable
  \begin{equation*}\label{q10}
  \lambda_{k+1}=\mu_k+\sigma\beta_k (Au_{k+1}-b).
  \end{equation*}

    \If{a stopping condition is satisfied}
        \State \Return $(x_{k+1}, \lambda_{k+1})$
    \EndIf
\EndFor
\end{algorithmic}
\end{algorithm}
\textcolor{red}{
\begin{remark}\label{remark}
It is worth noting that, in practice, it is difficult to obtain an exact solution to the subproblem
($\mathbf{Step~2 }$ in IAPDA). Accordingly, several algorithms have been developed to compute inexact solutions for such subproblems in practical problem-solving scenarios. Specifically, one can employ some classical splitting methods such as the proximal algorithm and the accelerated scheme FISTA \cite{b2020,beck2009}, as well as inexact versions derived from continuous dynamics to discrete algorithms \cite{He2025,he2026}.
\end{remark}
}
We now analyze the fast convergence rates of IAPDA. To begin, we introduce some lemmas.

\begin{lemma}\textup{\cite[Lemma 3]{hehuang2025}}\label{lemma4.1}
Suppose that $f:\mathcal{H}\rightarrow \mathbb{R}$ is  a convex function and has a Lipschitz continuous gradient with constant $L_{f}$. Then,
$$
\left\langle \nabla f(z),x-y \right\rangle \geq f(x)-f(y)-\frac{L_{f}}{2} \|x-z\|^2, ~~\forall x,y,z\in\mathcal{H}.
$$
\end{lemma}

\begin{lemma}\textup{\cite[Lemma 4]{hehu2022}}\label{lemma4.2}
Let $\{h_k\}_{k \geq 1}$ be a sequence in the space $\mathcal{H}$ and $\{a_k\}_{k \geq 1}$ be a sequence in $[0,1)$. Assume that there exists $c \geq 0$ such that
\begin{equation*}
\left\| h_{k+1}+\sum_{i=1}^k a_i h_i \right\| \leq c, ~\forall k \geq 1.
\end{equation*}
 Then,
 $$\sup_{k \geq 1} \|h_k\| \leq \|h_1\|+ 2c. $$
\end{lemma}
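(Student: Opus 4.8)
The plan is to prove Lemma \ref{lemma4.2} by a telescoping-type argument that isolates $h_{k+1}$ from the running weighted sum. Write $S_k := \sum_{i=1}^k a_i h_i$, so that by hypothesis $\|h_{k+1}+S_k\|\le c$ for every $k\ge 1$, and in particular (taking $k=1$) $\|h_2+a_1h_1\|\le c$. The key observation is that the two consecutive bound conditions at indices $k-1$ and $k$ can be subtracted: from $\|h_{k}+S_{k-1}\|\le c$ and $\|h_{k+1}+S_k\|\le c$, and noting $S_k = S_{k-1}+a_kh_k$, we get $h_{k+1} = (h_{k+1}+S_k) - (h_k+S_{k-1}) - (a_k-1)h_k$, hence $\|h_{k+1}\| \le 2c + (1-a_k)\|h_k\|$ since $a_k\in[0,1)$ gives $|a_k-1| = 1-a_k \le 1$.

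From the recursion $\|h_{k+1}\|\le 2c + \|h_k\|$ one would naively only get a linearly growing bound, so the second step is to exploit the contraction factor $1-a_k$ more carefully, or — cleaner — to avoid the recursion entirely. Instead I would argue directly: for each fixed $k\ge 2$, write
\begin{equation*}
h_k = \Bigl(h_k + \sum_{i=1}^{k-1} a_i h_i\Bigr) - \sum_{i=1}^{k-1} a_i h_i,
\end{equation*}
so $\|h_k\| \le c + \sum_{i=1}^{k-1} a_i \|h_i\|$. Letting $M_k := \max_{1\le i\le k}\|h_i\|$, this yields $\|h_k\| \le c + M_{k-1}\sum_{i=1}^{k-1}a_i$, which still does not obviously close because $\sum a_i$ may be large. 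The honest route, then, is the subtraction identity above combined with a sharper bookkeeping: define $b_k := \|h_{k+1}+S_k\|\le c$ and also use $\|S_k\|\le b_k + \|h_{k+1}\|$ together with $\|h_{k+1}\| \le b_k + \|S_k\|$. The trick that makes it work is to bound $\|h_{k+1}+S_k\| \le c$ and $\|h_k + S_{k-1}\|\le c$ and then telescope to control $\sum_{i} a_i h_i$ indirectly; one sets up the quantity $\|S_k\|$ and shows $\|S_k - S_{k-1}\| = a_k\|h_k\|$ while $\|S_k\|$ stays within $c + \|h_{k+1}\|$, and then the condition $a_k < 1$ forces the increments to be absorbed.

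The cleanest correct argument, which is what I would actually write: induct to show $\|h_{k+1} + S_k\|\le c$ already encodes everything. Observe $h_1 = h_1$ so $\|h_1\|\le \|h_1\|$ trivially, and for $k\ge 1$,
\begin{equation*}
h_{k+1} = \bigl(h_{k+1}+S_k\bigr) - a_k\bigl(h_k + S_{k-1}\bigr) - (1-a_k)S_{k-1},
\end{equation*}
where $S_0 := 0$. Taking norms, $\|h_{k+1}\| \le c + a_k c + (1-a_k)\|S_{k-1}\|$. Now $\|S_{k-1}\| = \|(h_k+S_{k-1}) - h_k\| \le c + \|h_k\|$, so
\begin{equation*}
\|h_{k+1}\| \le c + a_k c + (1-a_k)(c + \|h_k\|) = 2c + (1-a_k)\|h_k\|.
\end{equation*}
Iterating from $\|h_1\|$: $\|h_{k+1}\| \le 2c\sum_{j=0}^{k-1}\prod_{i=k-j+1}^{k}(1-a_i) + \|h_1\|\prod_{i=2}^{k}(1-a_i)$. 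Since each factor $1-a_i\le 1$, the product telescoping tail $\prod(1-a_i)\le 1$ bounds the $\|h_1\|$ term by $\|h_1\|$; for the $2c$ sum, the partial products are $\le 1$, but there are $k$ of them, so this still grows — meaning the genuine content must be that the factors $1-a_i$ decay. Re-examining, the actual mechanism is that $\sum a_i$ need not be bounded, but the displayed hypothesis with the $a_i$ inside the \emph{norm} is much stronger than termwise; the right move is to write $\|S_k\| \le \|h_{k+1}+S_k\| + \|h_{k+1}\| \le c + \|h_{k+1}\|$ and $\|S_k\|\ge \|h_{k+1}+S_k\| - \|h_{k+1}\|$ is not needed — instead use that $\|S_k - S_{k-1}\| = a_k\|h_k\| \le \|h_k\|$ and both $\|h_k+S_{k-1}\|, \|h_{k+1}+S_k\|\le c$ to get, via the identity $h_{k+1}-h_k = (h_{k+1}+S_k)-(h_k+S_{k-1}) - a_k h_k$, the bound $\|h_{k+1}\| \le \|h_k\| + 2c + \|h_k\|$; this is not contracting. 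The main obstacle, and the crux I expect to spend effort on, is precisely finding which combination of the consecutive constraints yields a \emph{bounded} (not merely linearly growing) estimate — the resolution is that $\sup_k\|h_{k+1}+S_k\|\le c$ together with $a_k\in[0,1)$ lets one bound $\sup_k\|S_k\|$ first (since $\|S_{k}\| \le \|S_{k-1}\| + a_k\|h_k\|$ and $\|h_k\|\le c + \|S_{k-1}\|$ give $\|S_k\|\le (1+a_k)\|S_{k-1}\| + a_k c$, still growing), so ultimately I would follow He--Hu's original proof verbatim: set $M := \sup_k\|h_k\|$ (a priori possibly $+\infty$), derive $\|h_{k+1}\|\le 2c + (1-a_k)M$ hence $M \le 2c + M$ if $\inf a_i = 0$ — and when $\inf_i a_i =: a > 0$ one gets $M \le 2c + (1-a)M$, i.e. $M \le 2c/a$; the stated clean bound $\|h_1\|+2c$ then comes from noting $\|h_{k+1}\|\le 2c + (1-a_k)\|h_k\|$ and a careful induction using $\|h_1\|$ as the base, where the contraction at each step prevents accumulation. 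I would present this induction cleanly, cite that it is \cite[Lemma 4]{hehu2022}, and flag the consecutive-constraint subtraction as the one genuinely non-obvious step.
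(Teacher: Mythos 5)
Your proposal does not close. Every recursion you derive ($\|h_{k+1}\| \le 2c + (1-a_k)\|h_k\|$, $\|S_k\| \le (1+a_k)\|S_{k-1}\| + a_k c$, and so on) grows linearly in $k$, and you acknowledge this yourself; the final paragraph then defers to ``He--Hu's original proof verbatim'' without supplying it, and the one quantitative claim made there ($M \le 2c + (1-a)M$ when $\inf_i a_i = a > 0$, hence $M \le 2c/a$) both presupposes $M < +\infty$, which is exactly what must be proved, and fails entirely when $\inf_i a_i = 0$, which the hypothesis $a_i \in [0,1)$ permits. (The paper states this lemma only as a citation to \cite[Lemma 4]{hehu2022} and gives no proof, so there is no in-text argument to fall back on.)

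The missing idea is a convex-combination identity for the partial sums $S_k := \sum_{i=1}^k a_i h_i$ rather than for $h_k$ itself. Substituting $h_k = (h_k + S_{k-1}) - S_{k-1}$ into $S_k = S_{k-1} + a_k h_k$ gives
\begin{equation*}
S_k = (1-a_k)\,S_{k-1} + a_k\,(h_k + S_{k-1}),
\end{equation*}
a genuine convex combination since $a_k \in [0,1)$. For $k \ge 2$ the second term has norm at most $c$ by hypothesis, so $\|S_k\| \le (1-a_k)\|S_{k-1}\| + a_k c \le \max\{\|S_{k-1}\|, c\}$, and induction from $\|S_1\| = a_1\|h_1\| \le \|h_1\|$ yields the \emph{uniform} bound $\|S_k\| \le \max\{\|h_1\|, c\}$ for all $k \ge 1$. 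Consequently $\|h_{k+1}\| \le \|h_{k+1}+S_k\| + \|S_k\| \le c + \max\{\|h_1\|, c\} \le \|h_1\| + 2c$, which together with the trivial case $k=1$ gives the claim. Your decomposition $\|S_k\| \le \|S_{k-1}\| + a_k\|h_k\|$ combined with $\|h_k\| \le c + \|S_{k-1}\|$ is precisely where the argument goes astray: it produces the factor $(1+a_k)$ instead of $(1-a_k)$ because it does not let the $a_k S_{k-1}$ contribution cancel against part of $S_{k-1}$, and that sign is the entire content of the lemma.
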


The following proposition will play a crucial role in the proof of the convergence rates of the sequence of iterates.

\begin{proposition}\label{theorem4.1}
Let $\{x_k,\lambda_k\}_{k \geq 0}$ be the sequence generated by $\rm{IAPDA}$ and let $\left( x^*,\lambda^* \right)\in \mathcal{S} $. Suppose that $L_f \leq \frac{1}{\beta_k}$. Then, the sequence $\{E_k\}_{k \geq 1}$ is non-increasing and
$$
\sum_{k \geq 1} ( t_{k+1}^2 \beta_k-t_{k+2}(t_{k+2}-1)\beta_{k+1}) \left( \mathcal{L}_\rho(x_{k+1},\lambda^*)-\mathcal{L}_\rho(x^*,\lambda^*) \right)<+\infty,
$$
$$
\sum_{k \geq 1} (1-L_f\beta_k) \| u_{k+1}-u_k \|^2<+\infty,
~~\sum_{k \geq 1} \left\| v_{k+1}-v_k \right\|^2<+\infty.
$$
\end{proposition}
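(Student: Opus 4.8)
The plan is to mimic, at the discrete level, the Lyapunov argument already used in the proof of Theorem~\ref{Th1}. First I would define the discrete energy function
\begin{equation*}
E_k := t_{k+1}^2 \beta_{k-1}\big(\mathcal{L}_\rho(x_k,\lambda^*)-\mathcal{L}_\rho(x^*,\lambda^*)\big)
 + \tfrac{1}{2}\|u_k - x^* \|^2 + \tfrac{1}{2\sigma}\|v_k - \lambda^*\|^2 ,
\end{equation*}
which is the natural discrete analogue of $\mathcal{E}_\gamma(t)$ in \eqref{def1}, with $u_k = x_k + (t_k-1)(x_k-x_{k-1})$ playing the role of $x(t)+\tfrac{t}{\alpha-1}\dot x(t)$ and $v_k$ the role of $\lambda(t)+\tfrac{t}{\alpha-1}\dot\lambda(t)$ (the factor $1/\sigma$ accounts for the rescaling of the dual time step). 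Non-negativity of $E_k$ is immediate. The goal is to show $E_{k+1} - E_k \le -(t_{k+1}^2\beta_k - t_{k+2}(t_{k+2}-1)\beta_{k+1})(\mathcal{L}_\rho(x_{k+1},\lambda^*)-\mathcal{L}_\rho(x^*,\lambda^*)) - (1-L_f\beta_k)\|u_{k+1}-u_k\|^2 - \|v_{k+1}-v_k\|^2$ (up to harmless constants), and then sum the telescoping inequality from $k=1$ to $\infty$; since $E_k \ge 0$ and $E_1$ is finite, all three series are finite, and $\{E_k\}$ is non-increasing because each term subtracted is non-negative (using $L_f\beta_k\le 1$ from the hypothesis, $t_{k+1}^2\beta_k - t_{k+2}(t_{k+2}-1)\beta_{k+1}\ge 0$ from \eqref{q0}, and $t_{k+1}^2 - t_{k+1} - t_k^2 \le 0$ from \eqref{q1}).

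The heart of the computation is estimating $E_{k+1}-E_k$ term by term. For the Lagrangian term I would write $t_{k+2}^2\beta_k(\mathcal{L}_\rho(x_{k+1},\lambda^*)-\mathcal{L}_\rho(x^*,\lambda^*)) - t_{k+1}^2\beta_{k-1}(\mathcal{L}_\rho(x_k,\lambda^*)-\mathcal{L}_\rho(x^*,\lambda^*))$ and split it using convexity of $\mathcal{L}_\rho(\cdot,\lambda^*)$: the optimality condition for $x_{k+1}$ in \eqref{q8} yields a subgradient inclusion, which combined with Lemma~\ref{lemma4.1} (to handle the $\nabla f(\bar x_k)$ evaluated at the extrapolated point $\bar x_k$ rather than at $x_{k+1}$, producing the $-\tfrac{L_f}{2}\|x_{k+1}-\bar x_k\|^2$ term) gives a lower bound on $\langle \text{(subgradient)}, x_{k+1}-x^*\rangle$ of the form $\mathcal{L}_\rho(x_{k+1},\lambda^*) - \mathcal{L}_\rho(x^*,\lambda^*) + \tfrac{\rho}{2}\|Ax_{k+1}-b\|^2$ minus error terms. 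For the quadratic terms $\tfrac12\|u_{k+1}-x^*\|^2 - \tfrac12\|u_k-x^*\|^2$ and $\tfrac1{2\sigma}\|v_{k+1}-\lambda^*\|^2 - \tfrac1{2\sigma}\|v_k-\lambda^*\|^2$ I would use identity \eqref{o1} to get $\langle u_{k+1}-x^*, u_{k+1}-u_k\rangle - \tfrac12\|u_{k+1}-u_k\|^2$ and similarly for $v$; then substitute the expressions for $u_{k+1}-u_k$ and $v_{k+1}-v_k$ coming from \eqref{aa1} (namely $v_{k+1}-v_k = \tfrac{k+\alpha-1}{\alpha-1}(\lambda_{k+1}-\mu_k) = \tfrac{k+\alpha-1}{\alpha-1}\sigma\beta_k(Au_{k+1}-b)$, and the relation $u_{k+1}-u_k = t_{k+1}(x_{k+1}-x_k) - (t_k-1)(x_k-x_{k-1})$ paired with the second line of \eqref{aa1}). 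The cross terms involving $A^*$, $\xi_{k+1}$, $\phi_{k+1}$, $s_{k+1}$ should, after careful bookkeeping, cancel against each other and against the $\langle\lambda^*, Ax_{k+1}-b\rangle$ and $\rho\|Ax_{k+1}-b\|^2$ contributions from the Lagrangian estimate — exactly mirroring how, in the continuous proof, the terms in the two displayed inner-product identities collapse to leave only the clean expression in \eqref{a2}.

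The main obstacle I anticipate is the algebraic bookkeeping of the coefficient $t_{k+1}^2 - t_{k+1} - t_k^2$: this quantity is the discrete surrogate of the continuous $(3-\alpha)t\beta(t)/(\alpha-1)^2 + t^2\dot\beta(t)/(\alpha-1)^2 \le 0$, and making it appear with the correct sign requires using \eqref{q0} and \eqref{q1} in tandem at precisely the right place — specifically, when comparing $t_{k+2}^2\beta_k$ against $t_{k+1}^2\beta_k$ one writes $t_{k+2}^2\beta_k = t_{k+1}^2\beta_k + (t_{k+2}^2 - t_{k+2})\beta_k - t_{k+1}^2\beta_k + \dots$ and then the constraint $t_{k+2}(t_{k+2}-1)\beta_{k+1}\le t_{k+1}^2\beta_k$ from \eqref{q0} furnishes the needed slack, which is where the summand $t_{k+1}^2\beta_k - t_{k+2}(t_{k+2}-1)\beta_{k+1}$ in the stated series comes from. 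A secondary nuisance is that $\nabla f$ is evaluated at $\bar x_k$, not $x_{k+1}$, so Lemma~\ref{lemma4.1} must be invoked to transfer it, and one must check that the resulting $\tfrac{L_f}{2}\|x_{k+1}-\bar x_k\|^2$ error is dominated by the $-\tfrac12\|u_{k+1}-u_k\|^2$ term (this is why the hypothesis $L_f\beta_k\le 1$ enters and why the surviving coefficient of $\|u_{k+1}-u_k\|^2$ is $1-L_f\beta_k$). Once the one-step inequality is in hand, the three summability conclusions and the monotonicity of $\{E_k\}$ follow immediately by telescoping and non-negativity.
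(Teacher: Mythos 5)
Your overall strategy is exactly the paper's: the same three-block discrete Lyapunov function, identity \eqref{o1} for the two quadratic blocks, the optimality condition of \eqref{q8} combined with Lemma \ref{lemma4.1} to transfer $\nabla f(\bar x_k)$ to $x_{k+1}$ at the cost of $\tfrac{L_f\beta_k}{2}\|u_{k+1}-u_k\|^2$, cancellation of the primal--dual cross terms via $Ax^*=b$, and telescoping of the resulting one-step inequality. There is, however, one concrete defect: your choice of the Lagrangian coefficient, $E_0(k)=t_{k+1}^2\beta_{k-1}\bigl(\mathcal{L}_\rho(x_k,\lambda^*)-\mathcal{L}_\rho(x^*,\lambda^*)\bigr)$, is incompatible with the one-step decrease you claim. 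Write $\Delta_k:=\mathcal{L}_\rho(x_k,\lambda^*)-\mathcal{L}_\rho(x^*,\lambda^*)$. Because $u_{k+1}-x^*=(x_{k+1}-x^*)+(t_{k+1}-1)(x_{k+1}-x_k)$, the convexity estimate arising from $\tfrac12\|u_{k+1}-x^*\|^2-\tfrac12\|u_k-x^*\|^2$ supplies exactly $-t_{k+1}^2\beta_k\,\Delta_{k+1}+t_{k+1}(t_{k+1}-1)\beta_k\,\Delta_k$ and nothing more. With your $E_0$, the net coefficient of $\Delta_{k+1}$ in $E_{k+1}-E_k$ is $t_{k+2}^2\beta_k-t_{k+1}^2\beta_k=(t_{k+2}^2-t_{k+1}^2)\beta_k\ge 0$, a positive leftover that neither \eqref{q0} nor \eqref{q1} can absorb (both conditions bound $t_{k+2}(t_{k+2}-1)\beta_{k+1}$, not $t_{k+2}^2\beta_k$), so monotonicity of $\{E_k\}$ and the stated summability both break down; the surplus $t_{k+1}^2\beta_{k-1}-t_{k+1}(t_{k+1}-1)\beta_k\ge 0$ sitting on $\Delta_k$ cannot be traded against it since the two gaps live at different indices.

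The paper instead takes $E_0(k)=t_{k+1}(t_{k+1}-1)\beta_k\,\Delta_k$. Then the $\Delta_k$-contribution from the convexity estimate cancels $E_0(k)$ exactly, and the $\Delta_{k+1}$-balance becomes $t_{k+2}(t_{k+2}-1)\beta_{k+1}-t_{k+1}^2\beta_k\le 0$ by \eqref{q0} --- which is precisely the summand appearing in the first series of the statement, so \eqref{q1} is not even needed here. Once you replace your coefficient by this one, the remainder of your outline (the cross-term cancellation, the $\tfrac{L_f\beta_k-1}{2}\|u_{k+1}-u_k\|^2$ and $-\tfrac{1}{2\sigma}\|v_{k+1}-v_k\|^2$ terms, and the final telescoping from nonnegativity of $E_k$) goes through essentially verbatim as in the paper.
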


\begin{proof}
For $\left(x^*, \lambda^* \right)\in \mathcal{S}$, we introduce the energy function which defined as
\begin{equation*}\label{bb1}
E(k)=E_0(k)+E_1(k)+E_2(k),
\end{equation*}
where
$$
E_0 (k) :=t_{k+1}(t_{k+1}-1) \beta_k \left(\mathcal{L}_{\rho}(x_k,\lambda^*)-\mathcal{L}_{\rho}(x^*,\lambda^*)\right),
$$
$$
E_1(k):=\frac{1}{2}\lVert u_{k}-x^*\rVert^2,\textup{ and } E_2(k):=\frac{1}{2\sigma}\left\lVert \xi_{k} + s_{k}  \left(Ax_{k}- \phi_{k} \right)-\lambda^* \right\rVert^2.
$$
Let $v_k:=\xi_{k} + s_{k}  \left(Ax_{k}- \phi_{k} \right)$. Then, $
E_2(k)=\frac{1}{2\sigma}\left\lVert v_k-\lambda^* \right\rVert^2.
$
From (\ref{q2})-(\ref{q8}) and the definition of $\mathcal{L}_{\rho}$, we have
\begin{equation}\label{x1}
\begin{aligned}
0&\in (t_{k}-1)(x_k-x_{k-1})- t_{k+1}(x_{k+1}-x_k)-t_{k+1} \beta_k \left(\nabla f(\bar{x}_{k})+\partial g(x_{k+1})\right)\\
&~~-t_{k+1}\beta_k A^* v_{k+1}
- \rho t_{k+1} \beta_k A^* (Ax_{k+1}-b).
\end{aligned}
\end{equation}
Let
\begin{equation}\label{g1}
\begin{aligned}
\eta_{k+1}:=& \frac{1}{t_{k+1} \beta_k} \bigg( (t_{k}-1)(x_k-x_{k-1})- t_{k+1}(x_{k+1}-x_k)-t_{k+1} \beta_k \nabla f(\bar{x}_{k}) \\
& -t_{k+1}\beta_k A^* v_{k+1}
- \rho t_{k+1} \beta_k A^* (Ax_{k+1}-b) \bigg).
\end{aligned}
\end{equation}
This together with (\ref{q9}) follows that
\begin{equation}\label{b1}
\begin{aligned}
&\frac{1}{2}\lVert u_{k+1}-x^* \rVert^2-\frac{1}{2}\lVert u_{k}-x^*\rVert^2\\
=&\left\langle u_{k+1}-u_{k}, u_{k+1}-x^* \right\rangle-\frac{1}{2} \left\| u_{k+1}-u_{k} \right\|^2\\
=& \left\langle t_{k+1}(x_{k+1}-x_k)-(t_{k}-1)(x_k-x_{k-1}),\right.\\
&\left.x_{k+1}-x^*+(t_{k+1}-1)(x_{k+1}-x_k) \right\rangle-\frac{1}{2} \left\| u_{k+1}-u_{k} \right\|^2\\
=&-t_{k+1} \beta_k\bigg\langle \nabla f(\bar{x}_k)+ \eta_{k+1} + A^* v_{k+1}+\rho A^* (Ax_{k+1}-b),\\
&x_{k+1}-x^*+(t_{k+1}-1)(x_{k+1}-x_k) \bigg\rangle-\frac{1}{2} \left\| u_{k+1}-u_{k} \right\|^2\\
=& -t_{k+1} \beta_k \left\langle \nabla f(\bar{x}_k), x_{k+1}-x^*+(t_{k+1}-1)(x_{k+1}-x_k) \right\rangle\\
&-t_{k+1}\beta_k \left\langle A^* v_{k+1}+ \eta_{k+1}, x_{k+1}-x^*+(t_{k+1}-1)(x_{k+1}-x_k) \right\rangle\\
&- t_{k+1} \beta_k \left\langle \rho A^* (Ax_{k+1}-b), x_{k+1}-x^*+(t_{k+1}-1)(x_{k+1}-x_k) \right\rangle\\
&-\frac{1}{2} \left\| u_{k+1}-u_{k} \right\|^2\\
=&-t_{k+1} \beta_k \left\langle \nabla f(\bar{x}_k), x_{k+1}-x^*+(t_{k+1}-1)(x_{k+1}-x_k) \right\rangle\\
&-t_{k+1}\beta_k \left\langle A^* (v_{k+1}- \lambda^*),  u_{k+1}-x^* \right\rangle
- t_{k+1} \beta_k \left\langle A^* \lambda^*+\eta_{k+1}, x_{k+1}-x^* \right\rangle\\
&-t_{k+1}(t_{k+1}-1) \beta_k \left\langle A^* \lambda^*+\eta_{k+1}, x_{k+1}-x_k \right\rangle\\
&- \rho t_{k+1} \beta_k \left\langle A^* (Ax_{k+1}-b), x_{k+1}-x^*+(t_{k+1}-1)(x_{k+1}-x_k) \right\rangle\\
&-\frac{1}{2} \left\| u_{k+1}-u_{k} \right\|^2,
\end{aligned}
\end{equation}
where the first equality  holds due to (\ref{o1}) and the last equality holds due to
\begin{equation*}
\begin{aligned}
&-t_{k+1}\beta_k \left\langle A^* (v_{k+1}- \lambda^*),  u_{k+1}-x^* \right\rangle\\
=&-t_{k+1}\beta_k \left\langle A^* v_{k+1}+ \eta_{k+1}- \eta_{k+1}- A^* \lambda^*, x_{k+1}-x^*+(t_{k+1}-1)(x_{k+1}-x_k) \right\rangle\\
=&- t_{k+1}\beta_k \left\langle  A^* v_{k+1}+ \eta_{k+1}, x_{k+1}-x^*+(t_{k+1}-1)(x_{k+1}-x_k) \right\rangle\\
&+ t_{k+1} \beta_k  \left\langle  A^* \lambda^*+\eta_{k+1}, x_{k+1}-x^* \right\rangle+t_{k+1}(t_{k+1}-1) \beta_k \left\langle A^* \lambda^*+ \eta_{k+1}, x_{k+1}-x_k \right\rangle.
\end{aligned}
\end{equation*}
Since $f$ is a convex function and has a Lipschitz continuous gradient with constant $L_f$, these combine with Lemma \ref{lemma4.1} to give
\begin{equation}\label{v1}
\begin{aligned}
&-t_{k+1} \beta_k \left\langle \nabla f(\bar{x}_k), x_{k+1}-x^*+(t_{k+1}-1)(x_{k+1}-x_k) \right\rangle\\
\leq & -t_{k+1}\beta_k \left( f(x_{k+1})-f(x^*) \right)-t_{k+1}(t_{k+1}-1)\beta_k \left( f(x_{k+1})-f(x_k) \right)\\
&+\frac{L_f\beta_k t_{k+1}^2}{2}\left\| x_{k+1}-\bar{x}_{k} \right\|^2\\
=& -t_{k+1}\beta_k \left( f(x_{k+1})-f(x^*) \right)-t_{k+1}(t_{k+1}-1)\beta_k \left( f(x_{k+1})-f(x_k) \right)\\
&+\frac{L_f\beta_k}{2}\left\| u_{k+1}-u_{k} \right\|^2.
\end{aligned}
\end{equation}
Moreover, it follows from (\ref{x1}) and (\ref{g1}) that $\eta_{k+1}\in \partial g(x_{k+1})$. Then,
\begin{equation}\label{b2}
\begin{aligned}
&- t_{k+1} \beta_k \left\langle A^* \lambda^*+\eta_{k+1}, x_{k+1}-x^*
 \right\rangle-t_{k+1}(t_{k+1}-1) \beta_k \left\langle A^* \lambda^*+\eta_{k+1}, x_{k+1}-x_k \right\rangle\\
\leq & -t_{k+1}\beta_k \left( g(x_{k+1})-g(x^*) \right)-t_{k+1}(t_{k+1}-1)\beta_k \left( g(x_{k+1})-g(x_k) \right)\\
&- t_{k+1}\beta_k \left\langle  A^* \lambda^*, x_{k+1}-x^*+(t_{k+1}-1)(x_{k+1}-x_k) \right\rangle.
\end{aligned}
\end{equation}
Note that
\begin{equation*}
\begin{aligned}
&-\rho t_{k+1} \beta_k \left\langle A^*(Ax_{k+1}-b), (x_{k+1}-x^*)+(t_{k+1}-1)(x_{k+1}-x_k) \right\rangle\\
=&- \rho t_{k+1} \beta_k \left\| Ax_{k+1}-b \right\|^2
- \rho t_{k+1}(t_{k+1}-1) \beta_k \left\langle Ax_{k+1}-b, A x_{k+1}-A x_k \right\rangle\\
=&- \rho t_{k+1} \beta_k \left\| Ax_{k+1}-b \right\|^2 +\frac{\rho}{2} t_{k+1}(t_{k+1}-1) \beta_k \left\| Ax_{k}-b \right\|^2\\
& -\frac{\rho}{2} t_{k+1}(t_{k+1}-1) \beta_k \left\| Ax_{k+1}-b \right\|^2-\frac{\rho}{2}  t_{k+1}(t_{k+1}-1) \beta_k \left\| Ax_{k+1}-Ax_k \right\|^2\\
\leq & - \rho t_{k+1} \beta_k \left\| Ax_{k+1}-b \right\|^2 +\frac{\rho}{2} t_{k+1}(t_{k+1}-1) \beta_k \left\| Ax_{k}-b \right\|^2\\
&-\frac{\rho}{2} t_{k+1}(t_{k+1}-1) \beta_k \left\| Ax_{k+1}-b \right\|^2.
\end{aligned}
\end{equation*}
Together with (\ref{b1}), (\ref{v1}) and (\ref{b2}), we have
\begin{equation*}\label{b5}
\begin{aligned}
&E_1(k+1)-E_1(k)\\
=&\frac{1}{2}\lVert u_{k+1}-x^* \rVert^2-\frac{1}{2}\lVert u_{k}-x^*\rVert^2\\
\leq& -t_{k+1}\beta_k \left\langle A^* (v_{k+1}- \lambda^*),  u_{k+1}-x^* \right\rangle -t_{k+1}\beta_k \Big( f(x_{k+1})+g(x_{k+1})-f(x^*)-g(x^*) \Big)\\
&-t_{k+1}(t_{k+1}-1)\beta_k \Big( f(x_{k+1})+g(x_{k+1})-f(x_k)-g(x_{k}) \Big)\\
&- t_{k+1}\beta_k \left\langle  A^* \lambda^*, x_{k+1}-x^*+(t_{k+1}-1)(x_{k+1}-x_k) \right\rangle- \rho t_{k+1} \beta_k \left\| Ax_{k+1}-b \right\|^2\\
&+\frac{\rho}{2} t_{k+1}(t_{k+1}-1) \beta_k \left\| Ax_{k}-b \right\|^2-\frac{\rho}{2} t_{k+1}(t_{k+1}-1) \beta_k \left\| Ax_{k+1}-b \right\|^2\\
&+ \frac{L_f\beta_k-1}{2}\left\| u_{k+1}-u_{k} \right\|^2\\
\leq &-t_{k+1}\beta_k \left\langle A^* (v_{k+1}- \lambda^*),  u_{k+1}-x^* \right\rangle
- t_{k+1} \beta_k \left( \mathcal{L}_{\rho} (x_{k+1}, \lambda^*)-\mathcal{L}_{\rho} (x^*, \lambda^*) \right)\\
&-t_{k+1}(t_{k+1}-1) \beta_k \left( \mathcal{L}_{\rho} (x_{k+1}, \lambda^*)-\mathcal{L}_{\rho} (x_k, \lambda^*) \right)+\frac{L_f\beta_k-1}{2}\left\| u_{k+1}-u_{k} \right\|^2.
\end{aligned}
\end{equation*}
On the other hand,
\begin{equation*}
\begin{aligned}
&E_2(k+1)-E_2(k)\\
=&\frac{1}{2 \sigma}\lVert v_{k+1}- \lambda^* \rVert^2-\frac{1}{2 \sigma}\lVert v_{k}- \lambda^*\rVert^2\\
=&\frac{1}{\sigma} \left\langle v_{k+1}-v_{k},  v_{k+1}- \lambda^* \right\rangle
-\frac{1}{2 \sigma}\lVert v_{k+1}- v_{k}  \rVert^2\\
=&\frac{1}{\sigma} \left\langle t_{k+1} (\lambda_{k+1}-\mu_k),  v_{k+1}- \lambda^* \right\rangle -\frac{1}{2 \sigma}\lVert v_{k+1}- v_{k}  \rVert^2\\
=&t_{k+1} \beta_k \left\langle A u_{k+1}-b ,  v_{k+1}- \lambda^* \right\rangle-\frac{1}{2 \sigma}\lVert v_{k+1}- v_{k}  \rVert^2.
\end{aligned}
\end{equation*}
Thus,
\begin{equation}\label{b6}
\begin{split}
&E(k+1)-E(k)\\
=&E_0(k+1)-E_0(k)+E_1(k+1)-E_1(k)+E_2(k+1)-E_2(k)\\
\leq& \left( t_{k+2}(t_{k+2}-1)\beta_{k+1} - t_{k+1}^2 \beta_k \right) \left(\mathcal{L}_{\rho}(x_{k+1},\lambda^*)-\mathcal{L}_{\rho}(x^*,\lambda^*)\right)\\
&+\frac{L_f\beta_k-1}{2}\left\| u_{k+1}-u_{k} \right\|^2-\frac{1}{2 \sigma}\lVert v_{k+1}- v_{k}  \rVert^2.
\end{split}
\end{equation}
From (\ref{q0}), we have
\begin{equation*}
\begin{aligned}
t_{k+2}(t_{k+2}-1)\beta_{k+1} -t_{k+1}^2 \beta_k\leq 0.
\end{aligned}
\end{equation*}
By $L_f \leq \frac{1}{\beta_k}$, we have $L_f\beta_k-1 \leq 0$. Then, all the coefficients in the right-hand side of (\ref{b6}) are non-positive. Thus, the sequence $\{ E(k) \}_{k\geq 1}$ is non-increasing.

Summing (\ref{b6}) over $k=1,\dots,N$, we obtain
\begin{equation*}
\begin{aligned}
& \sum_{k=1}^N ( t_{k+1}^2 \beta_k-t_{k+2}(t_{k+2}-1)\beta_{k+1}) \left(\mathcal{L}_{\rho}(x_{k+1},\lambda^*)-\mathcal{L}_{\rho}(x^*,\lambda^*)\right)\\
&~~+ \sum_{k=1}^N \frac{1-L_f\beta_k}{2} \left\| u_{k+1}-u_{k} \right\|^2+\frac{1}{2 \sigma} \sum_{k=1}^N \lVert v_{k+1}- v_{k}  \rVert^2
\\
&\leq E(1)-E(N+1)\leq E(1).
\end{aligned}
\end{equation*}
Letting $N\rightarrow +\infty$ in the above inequality, we obtain
$$
\sum_{k \geq 1} ( t_{k+1}^2 \beta_k-t_{k+2}(t_{k+2}-1)\beta_{k+1}) \left( \mathcal{L}_\rho(x_{k+1},\lambda^*)-\mathcal{L}_\rho(x^*,\lambda^*) \right)<+\infty,
$$
$$
\sum_{k \geq 1} (1-L_f\beta_k) \| u_{k+1}-u_k \|^2<+\infty,
~~\sum_{k \geq 1} \left\| v_{k+1}-v_k \right\|^2<+\infty.
$$
The proof is complete.\qed
\end{proof}

Next, by following a similar argument as in \cite[Proposition 3.13]{bot2023},  we obtain the boundedness of the sequence of the primal-dual iterates $\{(x_k, \lambda_k)\}_{k\geq 0}$ in the following proposition.

\begin{proposition}
Let $\{(x_k, \lambda_k)\}_{k\geq 0}$ be the sequence generated by $\mathrm{IAPDA}$. Suppose that
\begin{equation}\label{b7}
\tau:=\inf_{k\geq 1}\frac{t_k}{k}>0.
\end{equation}
Then, the sequences $\{(x_k, \lambda_k)\}_{k\geq 0}$ and $\{t_{k+1}(x_{k+1}-x_{k},\lambda_{k+1}-\lambda_{k})\}_{k\geq 0}$  are bounded.
\end{proposition}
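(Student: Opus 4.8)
The plan is to exploit the monotonicity of the energy $\{E(k)\}$ established in Proposition~\ref{theorem4.1}. Since $(x^*,\lambda^*)\in\mathcal{S}$ is a saddle point of $\mathcal{L}_\rho$, we have $\mathcal{L}_\rho(x_k,\lambda^*)\ge\mathcal{L}_\rho(x^*,\lambda^*)$ for every $k$, so $E_0(k)\ge 0$; together with $E(k)\le E(1)$ this gives $E_1(k)+E_2(k)\le E(1)$, that is,
\begin{equation*}
\|u_k-x^*\|\le\sqrt{2E(1)}=:M_1 \qquad\text{and}\qquad \|v_k-\lambda^*\|\le\sqrt{2\sigma E(1)}=:M_2,\qquad\forall\,k\ge 1.
\end{equation*}
Thus the extrapolated sequences $\{u_k\}$ and $\{v_k\}$ are bounded from the outset, and the task reduces to transferring this bound to $\{(x_k,\lambda_k)\}$ and then to the scaled increments.

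For the primal iterate, I would rearrange (\ref{q9}) into $u_{k+1}-x^*=t_{k+1}(x_{k+1}-x^*)-(t_{k+1}-1)(x_k-x^*)$ and apply the elementary identity $\|ta-(t-1)b\|^2=t\|a\|^2-(t-1)\|b\|^2+t(t-1)\|a-b\|^2$ with $a=x_{k+1}-x^*$, $b=x_k-x^*$, $t=t_{k+1}$, which gives
\begin{equation*}
t_{k+1}\|x_{k+1}-x^*\|^2=\|u_{k+1}-x^*\|^2+(t_{k+1}-1)\|x_k-x^*\|^2-t_{k+1}(t_{k+1}-1)\|x_{k+1}-x_k\|^2.
\end{equation*}
Dropping the last (nonpositive) term and using that (\ref{q1}) together with the monotonicity of $\{t_k\}$ forces $t_{k+1}-1\le t_k^2/t_{k+1}\le t_k$, one obtains the telescoping estimate $t_{k+1}\|x_{k+1}-x^*\|^2\le M_1^2+t_k\|x_k-x^*\|^2$; summing over $k=1,\dots,N$ and using $t_1=1$ yields $t_{N+1}\|x_{N+1}-x^*\|^2\le\|x_1-x^*\|^2+N M_1^2$. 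At this point hypothesis (\ref{b7}) enters: it provides $t_{N+1}\ge\tau(N+1)$, so $N/t_{N+1}\le 1/\tau$ and hence $\|x_{N+1}-x^*\|^2\le\|x_1-x^*\|^2+M_1^2/\tau$ for every $N$, i.e.\ $\{x_k\}$ is bounded. The dual iterate is entirely analogous: from the relation $v_{k+1}=t_{k+1}\lambda_{k+1}-(t_{k+1}-1)\lambda_k$ established just before (\ref{r3}) one gets $v_{k+1}-\lambda^*=t_{k+1}(\lambda_{k+1}-\lambda^*)-(t_{k+1}-1)(\lambda_k-\lambda^*)$, and repeating the computation with $M_2$ in place of $M_1$ shows $\{\lambda_k\}$ is bounded. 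This second paragraph is the heart of the matter, and the main obstacle lies exactly here: one must set up the three-point identity so that the ``bad'' coefficient $t_{k+1}-1$ is dominated by $t_k$ --- which is precisely what (\ref{q1}) supplies --- and then absorb the $\mathcal{O}(N)$ growth of the telescoped right-hand side against the $\mathcal{O}(N)$ growth of $t_{N+1}$, which is precisely what (\ref{b7}) supplies.

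Once $\{u_k\},\{v_k\},\{x_k\},\{\lambda_k\}$ are known to be bounded, the scaled increments follow immediately: (\ref{q9}) gives $(t_{k+1}-1)(x_{k+1}-x_k)=u_{k+1}-x_{k+1}$, whence $t_{k+1}(x_{k+1}-x_k)=(u_{k+1}-x_{k+1})+(x_{k+1}-x_k)$ is bounded, and likewise $v_{k+1}-\lambda_{k+1}=(t_{k+1}-1)(\lambda_{k+1}-\lambda_k)$ gives that $t_{k+1}(\lambda_{k+1}-\lambda_k)=(v_{k+1}-\lambda_{k+1})+(\lambda_{k+1}-\lambda_k)$ is bounded. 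A variant of the middle step that does not rely on (\ref{b7}) is also available: writing $u_k-x^*=t_k(x_k-x_{k-1})+\sum_{j=2}^{k-1}(x_j-x_{j-1})+(x_1-x^*)$ and setting $h_j:=t_j(x_j-x_{j-1})$, one has $\bigl\|h_k+\sum_{j=2}^{k-1}t_j^{-1}h_j\bigr\|\le M_1+\|x_1-x^*\|$ for all $k\ge 2$, so Lemma~\ref{lemma4.2} (note $t_j^{-1}\in[0,1)$ for $j\ge 2$ and $h_1=0$) bounds $\sup_k\|h_k\|$, after which $x_k=u_k-\tfrac{t_k-1}{t_k}h_k$ is bounded; the dual component is handled in the same way.
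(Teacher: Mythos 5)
Your argument is correct, and it is worth noting that the paper itself does not prove this proposition at all: it defers to \cite[Proposition 3.13]{bot2023}. The intended argument there is essentially the one you relegate to your closing ``variant'': from $E(k)\le E(1)$ one gets $\{u_k\}$ and $\{v_k\}$ bounded, then the telescoped relation $u_{k+1}-u_1=t_{k+1}(x_{k+1}-x_k)+\sum_{i=1}^{k}(x_i-x_{i-1})$ together with Lemma~\ref{lemma4.2} bounds $t_k\|x_k-x_{k-1}\|$ directly, and $x_k=u_k-(t_k-1)(x_k-x_{k-1})$ then bounds $\{x_k\}$; this is exactly the computation the paper reuses inside Theorem~\ref{theorem}. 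Your main route is genuinely different and self-contained: the three-point identity $\|ta-(t-1)b\|^2=t\|a\|^2-(t-1)\|b\|^2+t(t-1)\|a-b\|^2$ combined with $t_{k+1}-1\le t_k^2/t_{k+1}\le t_k$ (from (\ref{q1}) and monotonicity) gives a telescoping bound $t_{N+1}\|x_{N+1}-x^*\|^2\le\|x_1-x^*\|^2+NM_1^2$, and (\ref{b7}) absorbs the linear growth. What each buys: your route avoids Lemma~\ref{lemma4.2} entirely and is elementary, but it bounds $\{x_k\}$ first and only then the scaled increments, and it consumes (\ref{b7}) already at the boundedness stage; the reference's route bounds $t_k\|x_k-x_{k-1}\|$ without invoking (\ref{b7}) and feeds directly into Theorem~\ref{theorem}. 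One small caveat on your variant only: the hypothesis $a_j=t_j^{-1}\in[0,1)$ of Lemma~\ref{lemma4.2} is not guaranteed for every $j\ge2$, since the algorithm only forces $t_j\ge1$ (e.g.\ the Attouch--Cabot choice starts the recursion at $t_{k_1}=1$); this is harmless because the corresponding $h_j$ vanishes or the sum can be started past the last index with $t_j=1$, but it deserves a remark. Your main proof has no such gap.
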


Now, we prove the convergence rates of the trajectory generated by IAPDA.

\begin{theorem}\label{theorem}
Let $\{(x_k, \lambda_k)\}_{k\geq 0}$ be the sequence generated by $\mathrm{IAPDA}$ and let $\left( x^*,\lambda^* \right)\in \mathcal{S} $.
Then,
$$
\|x_{k+1}-x_k\|= \mathcal{O}\left( \frac{1}{k} \right)
\textup{ and }
\|\lambda_{k+1}-\lambda_k\|= \mathcal{O}\left( \frac{1}{k} \right), \textup{ as } k\rightarrow +\infty.
$$
\end{theorem}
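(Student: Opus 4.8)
The plan is to obtain Theorem~\ref{theorem} as an immediate consequence of the boundedness proposition established just above: under its standing hypothesis $\tau := \inf_{k\geq 1} t_k/k > 0$, the sequence $\{t_{k+1}(x_{k+1}-x_k,\lambda_{k+1}-\lambda_k)\}_{k\geq 0}$ is bounded, say by $M$. Then I would simply divide: since $t_{k+1} \geq \tau(k+1) \geq \tau k$, we get $\|x_{k+1}-x_k\| \leq M/t_{k+1} \leq M/(\tau k)$ and, identically, $\|\lambda_{k+1}-\lambda_k\| \leq M/(\tau k)$, which is the claimed $\mathcal{O}(1/k)$ rate. So essentially all the analytic work has already been front-loaded into the energy estimate of Proposition~\ref{theorem4.1} (which makes $\{E(k)\}$ non-increasing under $L_f\beta_k\le 1$) and into the boundedness proof.

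If a self-contained argument is wanted, I would reconstruct the one non-formal ingredient, the boundedness of $\{t_{k+1}(x_{k+1}-x_k)\}$ and its dual analogue. From $\{E(k)\}$ non-increasing and $E_0(k),E_1(k),E_2(k)\ge 0$, the terms $E_1(k)=\tfrac12\|u_k-x^*\|^2$ and $E_2(k)=\tfrac{1}{2\sigma}\|v_k-\lambda^*\|^2$ stay bounded, so $\{u_k\}$ and $\{v_k\}$ are bounded. Setting $w_k:=t_k(x_k-x_{k-1})$, the relation $u_{k+1}=x_{k+1}+(t_{k+1}-1)(x_{k+1}-x_k)$ gives $w_{k+1}=(u_{k+1}-u_k)+\tfrac{t_k-1}{t_k}w_k$, i.e.\ $u_{k+1}-u_k=w_{k+1}-w_k+\tfrac{1}{t_k}w_k$; telescoping from $1$ to $N$ and using $w_1=0$ (since $x_0=x_1$) and $u_1=x_1$ yields
\[
w_{N+1}+\sum_{k=1}^{N}\tfrac{1}{t_k}w_k=u_{N+1}-u_1,
\]
a quantity of norm at most $\sup_k\|u_k\|+\|u_1\|=:c$. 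Since $1/t_k\in[0,1)$ for $k\ge 2$ and the $k=1$ summand vanishes, Lemma~\ref{lemma4.2} gives $\sup_k\|w_k\|<+\infty$, i.e.\ $\sup_k t_k\|x_k-x_{k-1}\|<+\infty$; the same computation with $v_{k+1}=\lambda_{k+1}+(t_{k+1}-1)(\lambda_{k+1}-\lambda_k)$ replacing $u_{k+1}$ gives $\sup_k t_k\|\lambda_k-\lambda_{k-1}\|<+\infty$. Dividing by $t_{k+1}\ge\tau(k+1)$ then closes the proof as in the first paragraph.

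The step I expect to carry the weight is the passage through Lemma~\ref{lemma4.2}. The point is to present the recursion so that its forcing term is the \emph{bounded} quantity $u_{N+1}-u_1$; a crude geometric-series unrolling of $w_{k+1}=(u_{k+1}-u_k)+\tfrac{t_k-1}{t_k}w_k$ combined with Cauchy--Schwarz and $\sum_k\|u_{k+1}-u_k\|^2<+\infty$ (from Proposition~\ref{theorem4.1}) would only yield the weaker $\|x_{k+1}-x_k\|=\mathcal{O}(1/\sqrt k)$, so the sharp $\mathcal{O}(1/k)$ genuinely relies on boundedness of $\{u_k\}$ and on $1/t_k$ being strictly below $1$. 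Everything else — the two algebraic recursions, the telescoping, and the final division — is routine bookkeeping.
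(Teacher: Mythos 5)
Your proposal is correct and, in its self-contained form, is essentially identical to the paper's proof: the paper likewise telescopes $u_{k+1}-u_k=t_{k+1}(x_{k+1}-x_k)-(t_k-1)(x_k-x_{k-1})$ using $x_0=x_1$, applies Lemma~\ref{lemma4.2} with $h_i=t_i(x_i-x_{i-1})$ and $a_i=1/t_i$ to the bounded quantity $u_{k+1}-u_1$, repeats the argument with $v_k$ for the dual increments, and divides by $t_k\gtrsim k$. Your first-paragraph shortcut (reading the rate off the boundedness of $\{t_{k+1}(x_{k+1}-x_k,\lambda_{k+1}-\lambda_k)\}$ asserted in the preceding proposition) is also valid.
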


\begin{proof}
From (\ref{q9}), we have
$$
u_{k+1}-u_k=x_k-x_{k-1}+t_{k+1}(x_{k+1}-x_{k})-t_k(x_k-x_{k-1}).
$$
Considering that $x_0=x_1$, we obtain
\begin{equation*}
u_{k+1}-u_1 = \sum_{i=1}^k (u_{i+1}-u_i) = t_{k+1}(x_{k+1}-x_{k})+\sum_{i=1}^k (x_{i}-x_{i-1}).
\end{equation*}
Since $\{u_k\}_{k \geq 1}$ is bounded, there exists an $M>0$ such that
\begin{equation*}
\left\| t_{k+1}(x_{k+1}-x_{k})+\sum_{i=1}^k (x_{i}-x_{i-1}) \right\| \leq M, ~\forall k \geq 1.
\end{equation*}
This together with Lemma \ref{lemma4.2} gives
\begin{equation*}
\sup_{k\geq 1} t_k \|x_k-x_{k-1}\| \leq 2M.
\end{equation*}
Thus,
\begin{equation*}
\|x_k-x_{k-1}\| = \mathcal{O} \left(\frac{1}{k} \right), \textup{ as } k\rightarrow +\infty.
\end{equation*}
Similarly, by using the boundedness of $\{v_k\}_{k \geq 1}$ and Lemma \ref{lemma4.2}, we deduce that
\begin{equation*}
\left\|\lambda_k-\lambda_{k-1}\right\| = \mathcal{O} \left(\frac{1}{k} \right), \textup{ as } k\rightarrow +\infty.
\end{equation*}
\end{proof}

Next, we investigate the fast convergence rates for the  primal-dual gap, the feasibility violation, and the objective residual.

\begin{theorem}\label{theorem4.2}
Let $\{(x_k, \lambda_k)\}_{k\geq 0}$ be the sequence generated by $\mathrm{IAPDA}$ and let $\left( x^*, \lambda^* \right) \in \mathcal{S} $. Then, for each $k \geq 1$, it holds that
$$
\mathcal{L}_{\rho}(x_k, \lambda^*)-\mathcal{L}_{\rho}(x^*, \lambda^*)\leq \frac{E(1)}{ t_{k+1}(t_{k+1}-1) \beta_k},$$
$$\| A x_k -b \| \leq \frac{\beta_0 t_1^2\| A x_1-b \|+2C }{\beta_k t_{k+1} (t_{k+1}-1)},
$$
and
\begin{equation*}
\begin{aligned}
\left | (f+g)(x_k) -(f+g)(x^*) \right |
\leq& \frac{E(1)}{\beta_k t_{k+1} (t_{k+1}-1) } + \left\| \lambda^* \right\| \frac{\beta_0 t_1^2\| A x_1-b \|+2C }{\beta_k t_{k+1} (t_{k+1}-1)}\\
&+\frac{\rho}{2} \left( \frac{\beta_0 t_1^2\| A x_1-b \|+2C }{\beta_k t_{k+1} (t_{k+1}-1)} \right)^2,
\end{aligned}
\end{equation*}
where $ C:= \| m_1 \|+ \frac{1}{\sigma} \sup\limits_{k \geq 1}\| t_{k+1} (\lambda_{k+1}-\lambda_k) \|+ \frac{1}{\sigma} t_1 \| \lambda_1-\lambda_0 \|+ \frac{1}{\sigma} \sup\limits_{k \geq 1}\| \lambda_k \|+ \frac{1}{\sigma} \| \lambda_0\| $, $ m_1:= t_1^2 \beta_{0} (A x_1-b) $, and
\begin{equation*}
\begin{aligned}
E(1)&=t_{2}(t_{2}-1) \beta_1 \left(\mathcal{L}_{\rho}(x_1,\lambda^*)-\mathcal{L}_{\rho}(x^*,\lambda^*)\right)+\frac{1}{2}\lVert u_{1}- x^* \rVert^2+\frac{1}{2\sigma}\lVert v_{1}- \lambda^* \rVert^2.
\end{aligned}
\end{equation*}
\end{theorem}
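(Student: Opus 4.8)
The plan is to combine the monotonicity of the energy $E(k)$ from Proposition \ref{theorem4.1} with the lower bound on $E_0(k)$ together with the boundedness results and Lemma \ref{lemma4.2}. First I would recall that by Proposition \ref{theorem4.1} the sequence $\{E(k)\}_{k\geq 1}$ is non-increasing, so $E(k)\leq E(1)$ for all $k\geq 1$. Since $E_1(k),E_2(k)\geq 0$ and $\left(x^*,\lambda^*\right)\in\mathcal{S}$ implies $\mathcal{L}_\rho(x_k,\lambda^*)-\mathcal{L}_\rho(x^*,\lambda^*)\geq 0$ (because $x^*$ minimizes $\mathcal{L}_\rho(\cdot,\lambda^*)$ over $\mathcal{H}$, using $0\in\partial_x\mathcal{L}_\rho(x^*,\lambda^*)$ and convexity), we get $E_0(k)\leq E(k)\leq E(1)$, i.e.
$$
t_{k+1}(t_{k+1}-1)\beta_k\left(\mathcal{L}_\rho(x_k,\lambda^*)-\mathcal{L}_\rho(x^*,\lambda^*)\right)\leq E(1),
$$
which is the first estimate. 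I would also expand $E(1)$ explicitly using $x_0=x_1$, $\lambda_0=\lambda_1$, $t_1=1$, which gives the stated closed form for $E(1)$.

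Next, for the feasibility violation I would track the telescoped identity for $v_{k+1}-v_k$. Using the definition $v_k=\xi_k+s_k(Ax_k-\phi_k)$ and the relations in (\ref{aa1}), one computes $v_{k+1}-v_k=t_{k+1}(\lambda_{k+1}-\mu_k)=\sigma\beta_k t_{k+1}(Au_{k+1}-b)$; more usefully, expressing everything in terms of the $Ax_k-b$ quantities one gets a telescoping formula of the shape $s_{k+1}(Ax_{k+1}-b)=m_1+\frac1\sigma\sum_{i}(\text{differences of }\lambda\text{'s})$. Concretely I would show
$$
s_{k+1}(Ax_{k+1}-b)+\sum_{i=1}^k a_i h_i\ \ \text{is bounded by }C,
$$
for an appropriate choice of $h_i$ (the $\lambda$-increments and shifted $\lambda$'s) and coefficients $a_i\in[0,1)$, so that Lemma \ref{lemma4.2} can be invoked; here the boundedness of $\{t_{k+1}(\lambda_{k+1}-\lambda_k)\}$ and of $\{\lambda_k\}$ from the preceding proposition is what makes $C$ finite. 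From $\sup_k\|s_{k+1}(Ax_{k+1}-b)\|\leq \beta_0 t_1^2\|Ax_1-b\|+2C$ and $s_{k+1}=\sigma\beta_k t_{k+1}^2$, combined with $\|Ax_{k+1}-b\|\leq \frac{1}{t_{k+1}}$ (coming from $u$-boundedness as in the proof of Theorem \ref{theorem}) — or more directly from shifting indices and the $t_{k+1}(t_{k+1}-1)$ scaling dictated by condition (\ref{q0}) — one arrives at the stated bound $\|Ax_k-b\|\leq\frac{\beta_0 t_1^2\|Ax_1-b\|+2C}{\beta_k t_{k+1}(t_{k+1}-1)}$.

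Finally, the objective residual follows by the same splitting already used at the end of the proof of Theorem \ref{Th1}: since $(x^*,\lambda^*)\in\mathcal{S}$ is a saddle point, $(f+g)(x^*)=\mathcal{L}_\rho(x^*,\lambda^*)$, and
$$
(f+g)(x_k)-(f+g)(x^*)=\mathcal{L}_\rho(x_k,\lambda^*)-\mathcal{L}_\rho(x^*,\lambda^*)-\langle\lambda^*,Ax_k-b\rangle-\frac{\rho}{2}\|Ax_k-b\|^2,
$$
so a two-sided bound for $|(f+g)(x_k)-(f+g)(x^*)|$ follows by the triangle inequality together with the Cauchy--Schwarz estimate $|\langle\lambda^*,Ax_k-b\rangle|\leq\|\lambda^*\|\,\|Ax_k-b\|$, and then substituting the two bounds just derived; for the lower bound one additionally uses $\mathcal{L}_\rho(x_k,\lambda^*)-\mathcal{L}_\rho(x^*,\lambda^*)\geq 0$ so the negative terms are controlled.

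I expect the main obstacle to be the feasibility estimate: setting up the correct telescoping identity for $s_{k+1}(Ax_{k+1}-b)$ in terms of $\lambda$-increments and verifying that the resulting coefficients $a_i$ lie in $[0,1)$ so that Lemma \ref{lemma4.2} genuinely applies, and then bookkeeping the index shift between $t_{k+1}^2$ (from $s_{k+1}$) and the $t_{k+1}(t_{k+1}-1)$ that appears in the final denominator (this is exactly where hypothesis (\ref{q0}) gets used). Everything else is a direct consequence of the monotonicity of $E(k)$ and the algebra of the saddle-point inequality.
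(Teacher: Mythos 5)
Your proposal is correct and follows essentially the same route as the paper: energy monotonicity $E(k)\leq E(1)$ for the primal--dual gap, the telescoping identity $m_{k+1}+\sum_{i=1}^k a_i m_i = m_1+\tfrac{1}{\sigma}\left(t_{k+1}(\lambda_{k+1}-\lambda_k)-t_1(\lambda_1-\lambda_0)+(\lambda_k-\lambda_0)\right)$ with Lemma \ref{lemma4.2} applied to the scaled residuals $m_i=t_i^2\beta_{i-1}(Ax_i-b)$ (note these, not the $\lambda$-increments, play the role of $h_i$; the $\lambda$-quantities only enter the constant $C$) for the feasibility bound, and the Lagrangian decomposition for the objective residual. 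The index shift from $\beta_{k-1}t_k^2$ to $\beta_k t_{k+1}(t_{k+1}-1)$ via (\ref{q0}) and the verification $a_i\in[0,1)$ are exactly where you anticipated they would be needed.
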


\begin{proof}
From the proof of Proposition \ref{theorem4.1}, we have
$$ t_{k+1}(t_{k+1}-1) \beta_k \left( \mathcal{L}_\rho(x_{k},\lambda^*)-\mathcal{L}_\rho(x^*,\lambda^*) \right) \leq E(k) \leq E(1).
$$
Thus,
$$ \mathcal{L}_\rho(x_{k},\lambda^*)-\mathcal{L}_\rho(x^*,\lambda^*)  \leq \frac{E(1)}{ t_{k+1}(t_{k+1}-1) \beta_k}.
$$
Since $\{t_{k+1}(\lambda_{k+1}-\lambda_{k})\}_{k\geq 0}$ and $\{ \lambda_k \}_{k\geq 0}$ are bounded and
$$
t_{k+1} (\lambda_{k+1}-\mu_k)=t_{k+1}(\lambda_{k+1}-\lambda_k)-(t_k-1)(\lambda_k-\lambda_{k-1}),~\forall k\geq 1,
$$
it follows that $\{ t_{k+1} (\lambda_{k+1}-\mu_k) \}_{k\geq 1}$ is bounded.
On the other hand,
\begin{equation}\label{z1}
\begin{aligned}
t_{k+1} (\lambda_{k+1}-\mu_k )
=&\sigma t_{k+1} \beta_k \left( A u_{k+1}- b \right)\\
=& \sigma t_{k+1} \beta_k \left( A x_{k+1}-b+ (t_{k+1}-1)(A x_{k+1}-A x_{k}) \right)\\
=&\sigma t_{k+1} \beta_k \left( t_{k+1} (A x_{k+1}-b) -(t_{k+1}-1)(A x_{k}-b) \right)\\
=&\sigma \left( m_{k+1}+ (a_k-1) m_k \right),
\end{aligned}
\end{equation}
where $ m_k:= t_k^2 \beta_{k-1} (A x_k-b) $ and $ a_k:=1- \frac{ t_{k+1} (t_{k+1}-1) \beta_k }{ t_k^2 \beta_{k-1} } $. Note that
$$
\sum_{i=1}^k (m_{i+1}+ (a_i-1) m_i )=m_{k+1}-m_1 + \sum_{i=1}^k a_i m_i.
$$
This together with (\ref{z1}) gives
\begin{equation*}
\begin{aligned}
m_{k+1}+\sum_{i=1}^k a_i m_i
=& m_1 + \frac{1}{\sigma} \sum_{i=1}^k t_{i+1} (\lambda_{i+1}-\mu_i )\\
=& m_1 + \frac{1}{\sigma} \sum_{i=1}^k \left( t_{i+1} (\lambda_{i+1}-\lambda_i )- (t_i-1) (\lambda_i- \lambda_{i-1} ) \right) \\
=& m_1+ \frac{1}{\sigma} \left( t_{k+1} (\lambda_{k+1}-\lambda_k )-t_1 (\lambda_1- \lambda_0 )+ (\lambda_k- \lambda_{0} ) \right).
\end{aligned}
\end{equation*}
Thus,
$$
\left\| m_{k+1}+ \sum_{i=1}^k a_i m_i \right \|= \left\| m_1+ \frac{1}{\sigma} \left( t_{k+1} (\lambda_{k+1}-\lambda_k )-t_1 (\lambda_1- \lambda_0 )+ (\lambda_k- \lambda_{0} ) \right) \right\| \leq C,
$$
where $C:= \| m_1 \|+ \frac{1}{\sigma} \sup\limits_{k \geq 1}\| t_{k+1} (\lambda_{k+1}-\lambda_k) \|+ \frac{1}{\sigma} t_1 \| \lambda_1-\lambda_0 \|+ \frac{1}{\sigma} \sup\limits_{k \geq 1}\| \lambda_k \|+ \frac{1}{\sigma} \| \lambda_0\| $. Moreover,  from (\ref{q0}), we have $0 \leq a_k <1 $, $\forall k \geq 1$. Then,
$$
\| A x_k -b \| \leq \frac{\beta_0 t_1^2\| A x_1-b \|+2C }{\beta_{k-1} t_k^2} \leq \frac{\beta_0 t_1^2\| A x_1-b \|+2C }{\beta_k t_{k+1} (t_{k+1}-1)},
$$
where the first inequality holds due to Lemma \ref{lemma4.2} and  the second inequality holds  due to (\ref{q0}). This together with the definition of $\mathcal{L}_{\rho}$  yields
\begin{equation*}
\begin{aligned}
&\left | (f+g)(x_k) -(f+g)(x^*) \right | \\
\leq&  \mathcal{L}_{\rho}(x_k, \lambda^*)-\mathcal{L}_{\rho}(x^*, \lambda^*) + \left\| \lambda^* \right\| \|Ax_k-b\|
+\frac{\rho}{2} \|Ax_k-b\|^2\\
\leq& \frac{E(1)}{ \beta_k t_{k+1} (t_{k+1}-1) } + \left\| \lambda^* \right\| \frac{\beta_0 t_1^2\| A x_1-b \|+2C }{\beta_k t_{k+1} (t_{k+1}-1)}\\
&+\frac{\rho}{2} \left( \frac{\beta_0 t_1^2 \| A x_1-b \|+2C }{\beta_k t_{k+1} (t_{k+1}-1)} \right)^2.
\end{aligned}
\end{equation*}
The proof is complete.\qed
\end{proof}

In the following,  in terms of using Nesterov's rule \cite{n1983}, Chambolle-Dossal rule \cite{c2015} and Attouch-Cabot rule \cite{attouch2018}, we can obtain the convergence rates for the primal-dual gap, the feasibility violation, and the objective residual introduced in Theorem \ref{theorem4.2}.
\begin{remark}
(i) We consider the Nesterov's rule as proposed in \cite{n1983}:
$$
t_1:=1 \textup{ and } t_{k+1}:=\frac{1+\sqrt{1+4 t_k^2 } }{2},~\forall k\geq 1.
$$
Clearly, this sequence fulfills (\ref{q1}). On the other hand, we have (see, for instance, \cite[Lemma 4.3]{beck2009}), $t_k \geq \frac{k+1}{2}$, $\forall k \geq 1$. Thus, (\ref{b7}) is verified for $\tau \geq \frac{1}{2}$. Furthermore, since $t_{k+1}(t_{k+1}-1) \geq \frac{k(k+2)}{ 2 } \geq \frac{k^2}{2}$, $\forall k \geq 1$, it follows that
$$
\mathcal{L}_{\rho}(x_k, \lambda^*)-\mathcal{L}_{\rho}(x^*, \lambda^*)=\mathcal{O}\left( \frac{1}{k^2  \beta_{k}} \right), \textup{ as } k\rightarrow +\infty,
$$
$$
\| A x_k -b \| = \mathcal{O} \left( \frac{1}{ k^2 \beta_k } \right) \textup{ and }
| (f+g)(x_k) -(f+g)(x^*) | = \mathcal{O} \left( \frac{1}{ k^2 \beta_k } \right), \textup{ as } k\rightarrow +\infty.
$$
\\
(ii) We consider the Chambolle-Dossal rule as proposed in \cite{c2015}:
$$
t_k:=1+\frac{k-1}{\alpha-1}=\frac{k+\alpha-2}{\alpha-1},~\forall k \geq 1, ~ \alpha \geq 3.
$$
We first show that this sequence fulfills (\ref{q1}). Indeed,
$$
t_{k + 1}^2 - t_{k + 1} - t_k^2 = \frac{k(3-\alpha)-(\alpha-2)^2}{ (\alpha-1)^2 } < 0, ~\forall k \geq 1.
$$
On the other hand, for each $k \geq 1$,
$$
\frac{t_k}{k}=\frac{1}{\alpha-1}+\frac{\alpha-2}{k(\alpha-1)},
$$
which implies that (\ref{b7}) is verified for $\tau= \frac{1}{ \alpha-1}$. Furthermore, since $t_{k+1}(t_{k+1}-1)=\frac{k}{\alpha-1}\left( 1+\frac{k}{\alpha-1} \right) \geq \frac{k^2}{ (\alpha-1)^2 }$, $\forall k \geq 1$, it follows that
$$
\mathcal{L}_{\rho}(x_k, \lambda^*)-\mathcal{L}_{\rho}(x^*, \lambda^*)=\mathcal{O}\left( \frac{1}{k^2  \beta_{k}} \right), \textup{ as } k\rightarrow +\infty,
$$
$$
\| A x_k -b \| = \mathcal{O} \left( \frac{1}{ k^2 \beta_k } \right) \textup{ and }
| (f+g)(x_k) -(f+g)(x^*) | = \mathcal{O} \left( \frac{1}{ k^2 \beta_k } \right), \textup{ as } k\rightarrow +\infty.
$$
(iii) We consider the Attouch-Cabot rule as proposed in \cite{attouch2018}:
$$
t_k:=\frac{k-1}{\alpha-1},~\forall k \geq 1,~\alpha \geq 3.
$$
We first show that this sequence fulfills (\ref{q1}). Indeed,
$$
t_{k + 1}^2 - t_{k + 1} - t_k^2 = \frac{k(3-\alpha)-1}{ (\alpha-1)^2 } < 0, ~\forall k \geq 1.
$$
Since $t_k\geq 1$ in IAPDA, we have $k \geq k_1:= [\alpha]+1$. Then, for each $k \geq k_1$,
$$
\frac{t_k}{k}=\frac{1}{\alpha-1}-\frac{1}{k(\alpha-1)},
$$
which implies that (\ref{b7}) is verified for $\tau= \frac{3}{4(\alpha-1)}$. Furthermore, since
$$
t_{k+1}(t_{k+1}-1)=k^2\left( \frac{1}{(\alpha-1)^2}-\frac{1}{k(\alpha-1)} \right) =\mathcal{O}(k^2), ~\mbox{as}~ k\rightarrow +\infty,
$$
it follows that
$$
\mathcal{L}_{\rho}(x_k, \lambda^*)-\mathcal{L}_{\rho}(x^*, \lambda^*)=\mathcal{O}\left( \frac{1}{k^2  \beta_{k}} \right), \textup{ as } k\rightarrow +\infty,
$$
$$
\| A x_k -b \| = \mathcal{O} \left( \frac{1}{ k^2 \beta_k } \right) \textup{ and }
| (f+g)(x_k) -(f+g)(x^*) | = \mathcal{O} \left( \frac{1}{ k^2 \beta_k } \right), \textup{ as } k\rightarrow +\infty.
$$
\end{remark}

\begin{remark}
From the time discretization of a second-order dynamical system with slow vanishing damping, Bo\c{t} et al. \cite{bot2023} proposed a fast augmented Lagrangian algorithm  for solving Problem (\ref{non}). Then, in \cite[Theorem 3.17]{bot2023} and \cite[Theorem 3.18]{bot2023}, they obtained the $\mathcal{O}\left( \frac{1}{ k^2 }\right)$ convergence rate for the primal-dual gap, the feasibility violation, and the objective residual. Thus, Theorem \ref{theorem} can be viewed as a generalization of \cite[Theorem 3.17]{bot2023} and \cite[Theorem 3.18]{bot2023}.
\end{remark}

\section{Numerical experiments}
In this section,  we validate the theoretical findings of previous sections through numerical experiments. In these experiments, all codes are implemented in MATLAB R2021b and executed on a PC equipped with a 2.40GHz Intel Core i5-1135G7 processor and 16GB  of RAM.

\begin{example}\label{example5.2}
Consider the following $ \ell_1-\ell_2 $ minimization problem
\begin{eqnarray}\label{exp5.2}
\left\{ \begin{split}
&\mathop{\mbox{min}}\limits_{x\in \mathbb{R}^n} \|x\|_1+\frac{\mu}{2} \|x\|^2\\
&\mbox{s.t.}~~Ax=b,
\end{split}
\right.
\end{eqnarray}
where $\mu\geq 0$, $ A\in \mathbb{R}^{m\times n} $ and
$ b\in\mathbb{R}^m $. We note that Problem (\ref{exp5.2}) has been used in a wide range of fields, such as  signal processing, machine learning, and statistics, especially in sparse signal recovery and feature selection problems. See \cite{a2018,f2011} and the references therein.
\vspace{-1em}
 \begin{figure}[H]%
     \centering
         \subfloat[$  \gamma=10^{-4} $ ]{
         \includegraphics[width=0.48\linewidth]{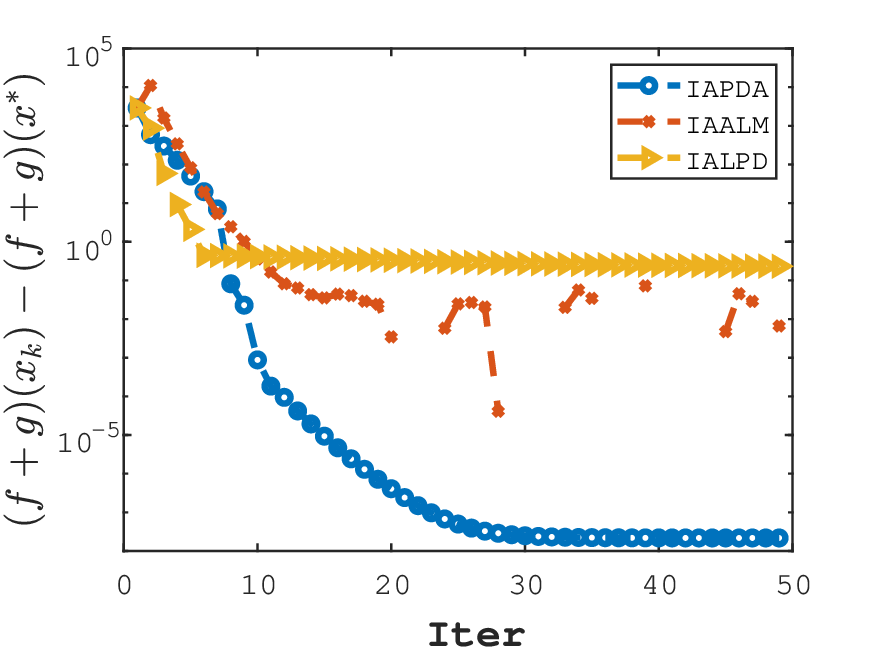}
         \hfill
         \includegraphics[width=0.48\linewidth]{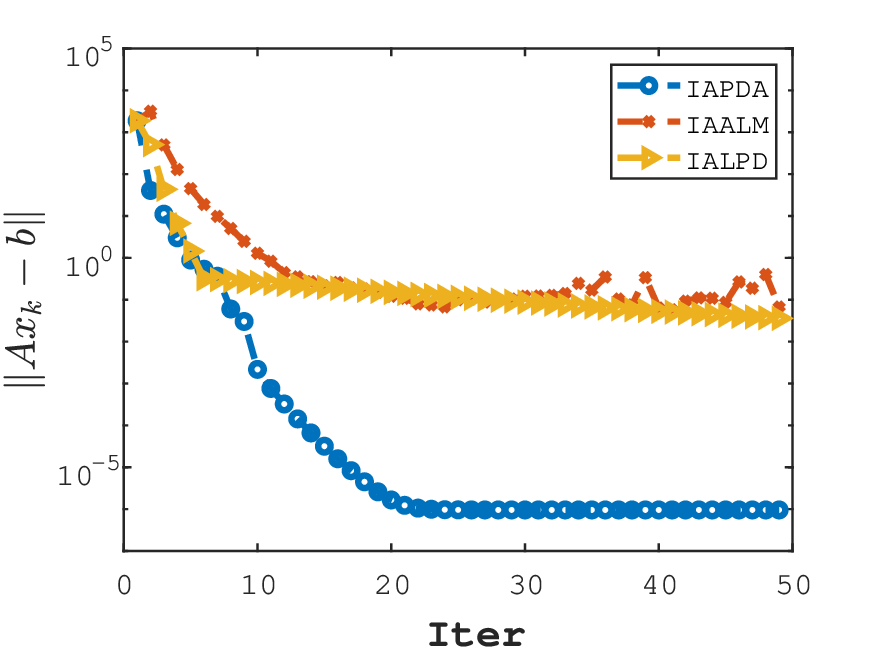}
         }\\
     \subfloat[$  \gamma=10^{-6} $ ]{
         \includegraphics[width=0.48\linewidth]{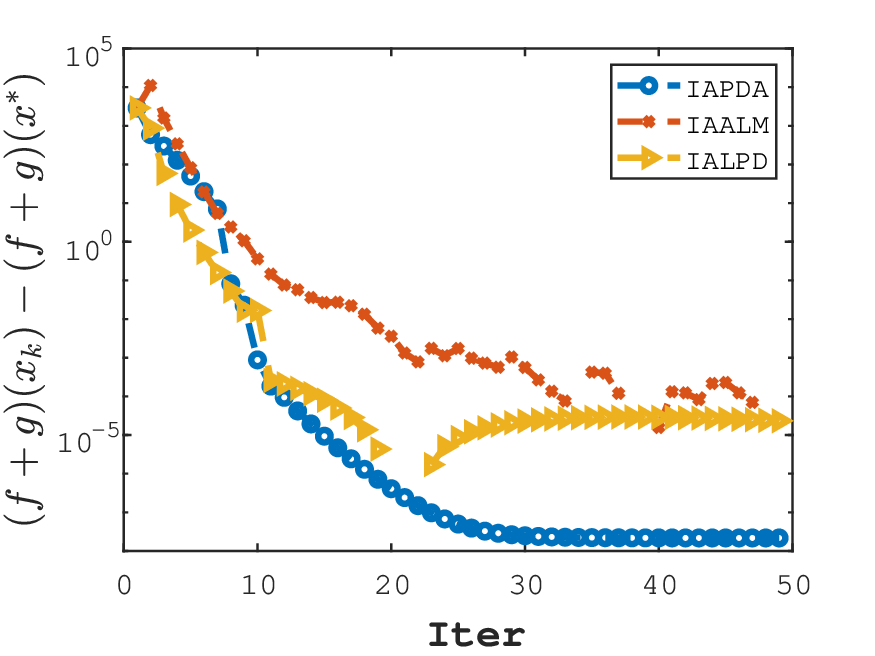}
         \hfill
         \includegraphics[width=0.48\linewidth]{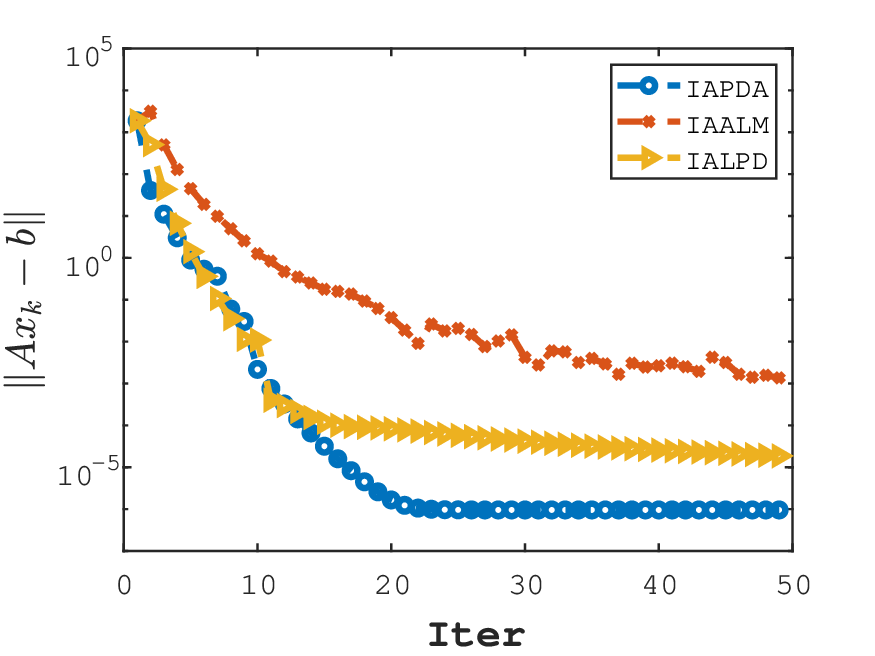}
         }\\
     \subfloat[$  \gamma=10^{-8} $ ]{
         \includegraphics[width=0.48\linewidth]{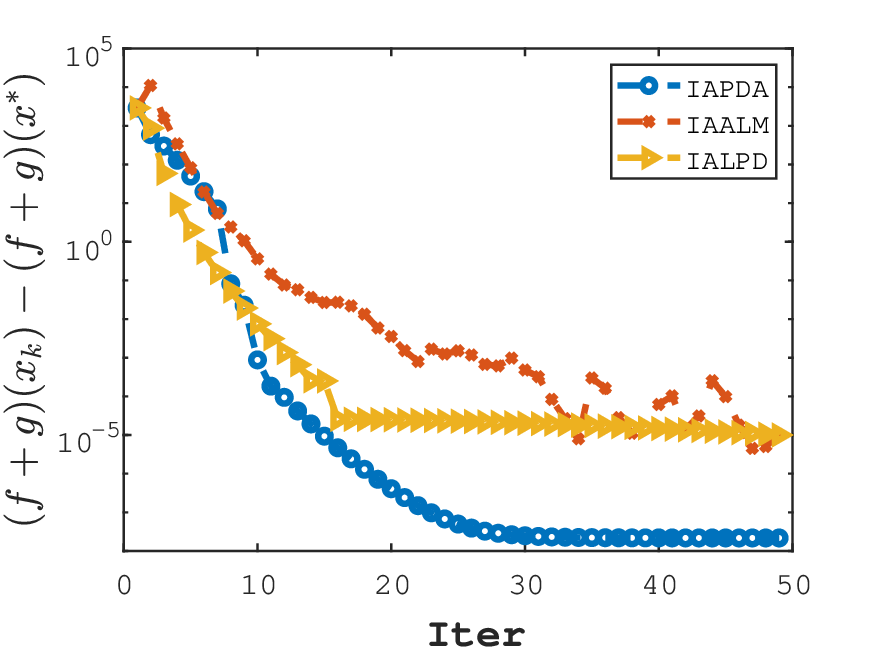}
         \hfill
         \includegraphics[width=0.48\linewidth]{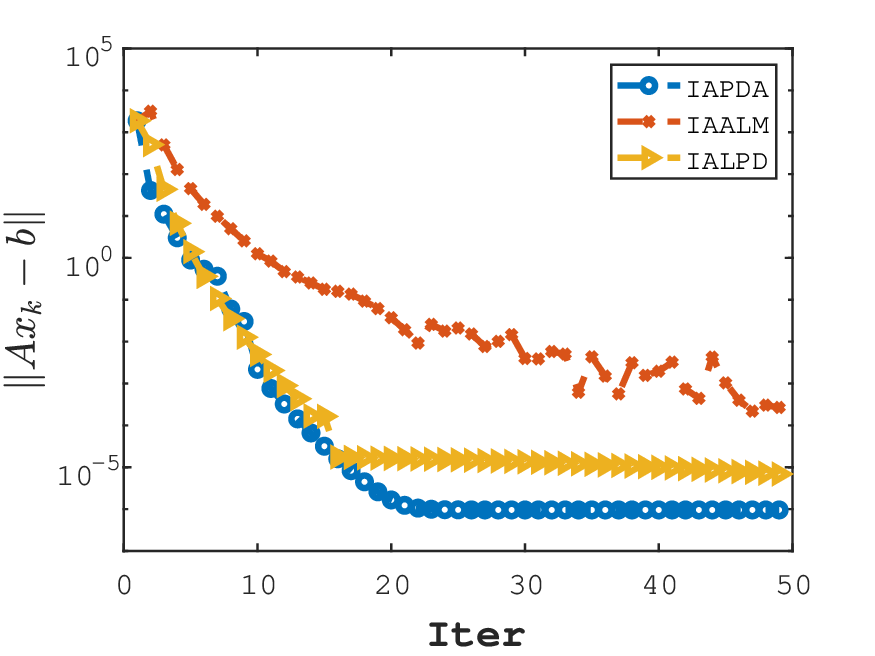}
         }\\
     \caption{Numerical results of IAPDA, IAALM and IALPD under various subtolerances.}
     \label{HD-exp2}
 \end{figure}
 \vspace{-1em}

Let $m=1500$, $n=2000$ and $\mu=1.5$. We generate the matrix $A$ using the standard Gaussian distribution. The original solution $x^*\in\mathbb{R}^n$ is generated from a Gaussian distribution $\mathcal{N}(0,4)$, with its entries clipped to the interval [-2,2] and sparsified so that only $5\%$ of its elements are non-zero. The noise $\omega$ is generated from a standard Gaussian distribution and normalized
to have a norm of $\|\omega\| = 10^{-6}$. Furthermore, we select
$
b=Ax^*+\omega.
$

We solve the subproblem  occurring in IAPDA using the fast iterative shrinkage-thresholding algorithm (FISTA, \cite{beck2009}). The stopping condition is
$$
\frac{ \left\|z^{\prime}_k-z^{\prime}_{k-1} \right\| }{ \max \{ \left\|z_{k-1}^{\prime} \right\|,1\} } \leq \gamma
$$
or the number of iterations exceeds 150. Here, the $z^{\prime}_k$ are the iterates generated by FISTA with the accuracy $\gamma:=\{10^{-4}$, $10^{-6}$, $10^{-8}\}$, respectively.
\textcolor{red}{As noted in Remark \ref{remark}, the subproblem generally does not admit an explicit closed-form solution. In this example, the subproblem is solved approximately by the FISTA with a sufficiently small stopping tolerance. Although the theoretical convergence of IAPDA is established under the assumption that the subproblem is solved exactly, such a highly accurate approximate solution can be regarded as a small perturbation of the exact one. As shown similarly in \cite{s2011,He2025,he2026}, this mild approximation error does not affect the convergence properties under proper error control.}

In the following experiments, we solve Problem (\ref{exp5.2}) on the time interval $[1, 50]$. Set $t_0=t_1:=1$.
We compare IAPDA with the inertial accelerated augmented Lagrangian method (IAALM) proposed in \cite{kang2015} and the inertial accelerated linearized primal-dual method (IALPD) proposed in \cite{he2022}.  Here are the algorithm parameter settings:
\begin{itemize}
\item IAPDA:  $ \rho=10^{-4}$, $\sigma=10$, $\beta_0=2$ and $\alpha=15$.
\item IAALM: $\tau=0.01$.
\item IALPD: $s=1$, $M_k=\frac{s I_n}{n}$ and $\alpha=15$.
\end{itemize}

As shown in Figure \ref{HD-exp2}, which displays numerical results for various $\gamma$ and the first $100$ iterations, IAPDA achieves significantly superior performance over IAALM and IALPD under various subtolerances.
\end{example}

\begin{example}\label{example5.3}
Consider the following non-negative least squares problem
\begin{eqnarray}\label{exp5.3}
\mathop{\mbox{min}}\limits_{x\in \mathbb{R}^n} f(x):=\frac{1}{2} \|Ax-b\|^2,
\end{eqnarray}
where $ A\in \mathbb{R}^{m\times n} $ and $ b\in\mathbb{R}^m $. We generate a random matrix $A\in \mathbb{R}^{m \times n}$ with density $\gamma\in (0,1]$ and a random $ b\in\mathbb{R}^m $. The nonzero entries of $A$ are independently generated from a uniform distribution in $[0,0.1]$.

In the following experiments, we solve Problem (\ref{exp5.3}) on the time interval $[1, 2000]$. Set $ t_0=t_1:=1 $. We compare IAPDA with FISTA proposed in \cite{beck2009} and the accelerated forward-backward method (AFBM) proposed in \cite{a2016}. Here are the algorithm parameter settings:
\begin{itemize}
\item IAPDA: $\beta_0=1$.
\item FISTA: $s=\frac{1}{\|A\|^2}$.
\item AFBM: $s=\frac{1}{\|A\|^2}$ and $\alpha=5$.
\end{itemize}
Moreover, for $\gamma=0.5$ and $\gamma=1$, two different dimension settings are respectively considered:
\begin{itemize}
\item $  m=500$,   $n=1000$.
\item $  m=1500$,   $n=2000$.
\end{itemize}

As shown in Figures \ref{HD-exp3} and \ref{HD-exp4}, compared to FISTA and AFBM, IAPDA can achieve higher accuracy on two different dimension settings. Moreover, IAPDA achieves more stable performance, while the convergence speed and accuracy of FISTA and AFBM
are obviously affected by $\gamma$.

\vspace{-1em}
 \begin{figure}[H]%
     \centering
         \subfloat[$  \gamma=0.5 $ ]{
         \includegraphics[width=0.48\linewidth]{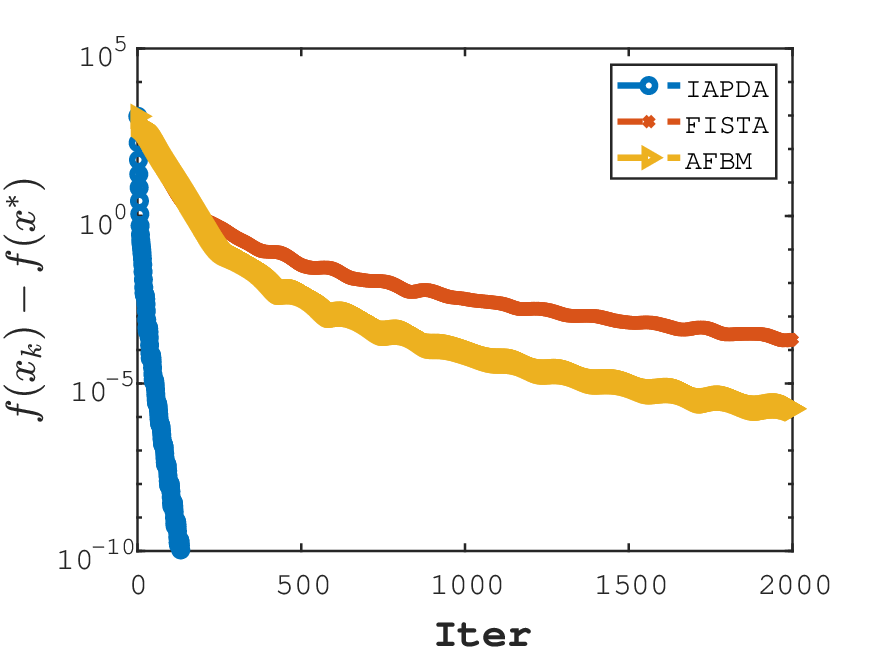}
         }
         \subfloat[$  \gamma=1 $ ]{
         \includegraphics[width=0.48\linewidth]{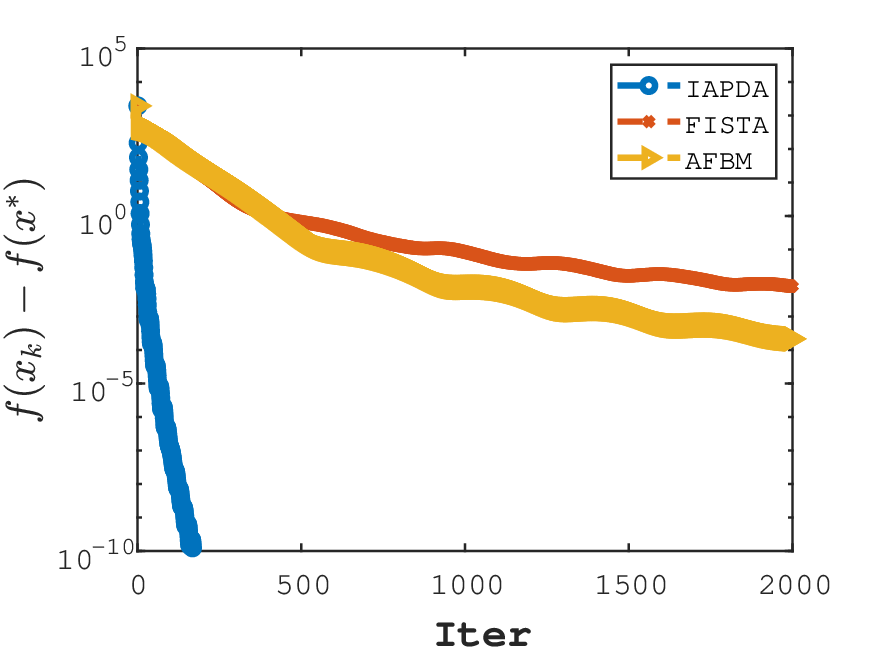}
         }\\
     \caption{Numerical results of IAPDA, FISTA and AFBM when $m=500$, $n=1000$.}
     \label{HD-exp3}
 \end{figure}
 \vspace{-1em}

\vspace{-1em}
 \begin{figure}[H]%
     \centering
         \subfloat[$  \gamma=0.5 $ ]{
         \includegraphics[width=0.48\linewidth]{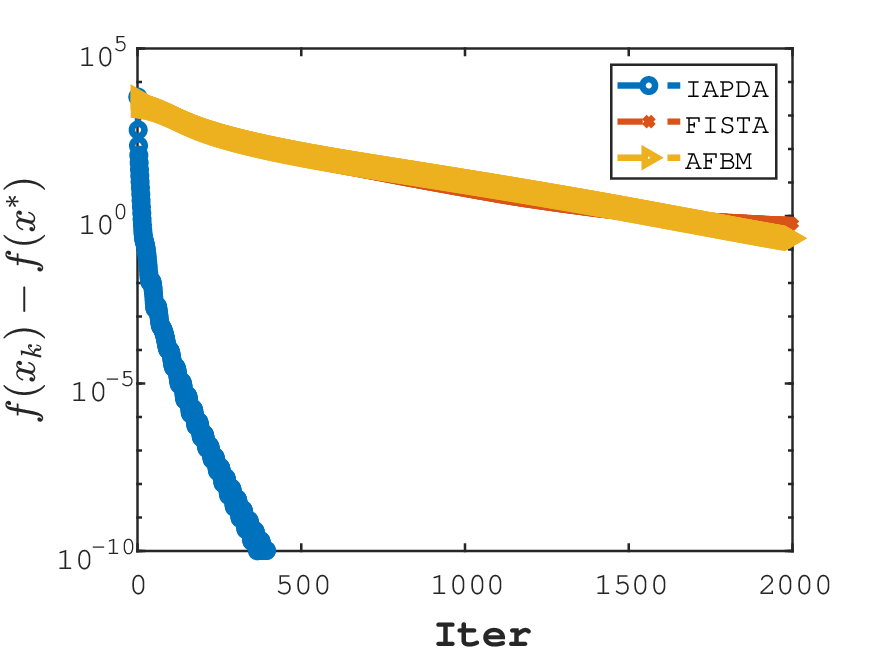}
         }
         \subfloat[$  \gamma=1 $ ]{
         \includegraphics[width=0.48\linewidth]{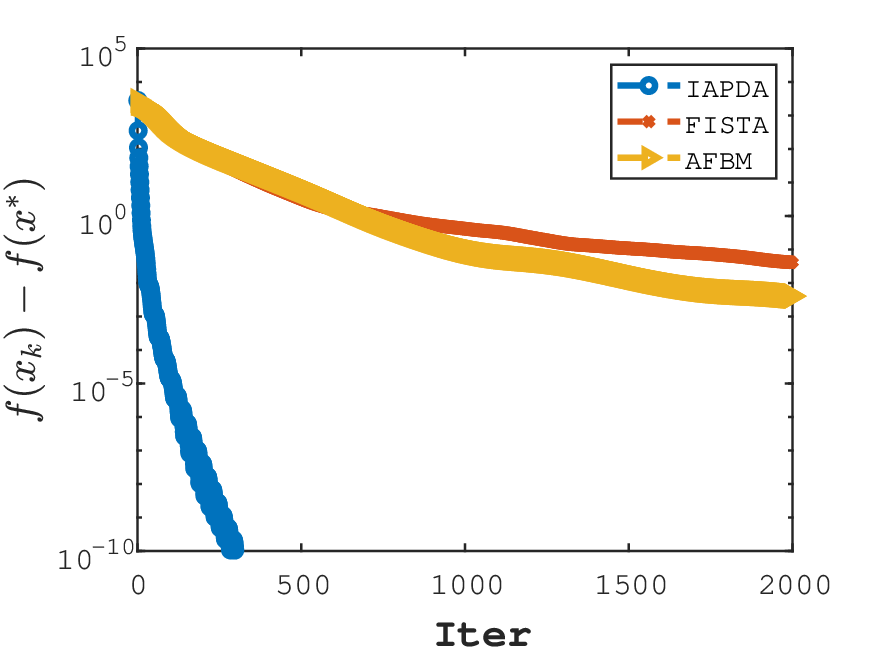}
         }\\
     \caption{Numerical results of IAPDA, FISTA and AFBM when $m=1500$, $n=2000$.}
     \label{HD-exp4}
 \end{figure}
\end{example}

\section{Conclusion}
In this paper, we proposed the IAPDA derived from the time discretization of a second-order differential system (\ref{dyn}), for solving non-smooth convex optimization problems with linear equality constraints. Then, we established convergence rates for the primal-dual gap, the feasibility violation, and the objective residual. Numerical experiments on $ \ell_1-\ell_2 $ minimization problem and the nonnegative least squares problem demonstrated the effectiveness and superior performance of IAPDA over existing state-of-the-art methods.

\textcolor{red}{Despite these advances, the convergence analysis of trajectories generated by IAPDA remains an open challenge. In future work, we aim to address this limitation by reformulating the system and constructing an appropriate energy function, with the goal of rigorously establishing trajectory convergence. }Moreover, the framework presented here can be extended to develop accelerated algorithms for separable convex optimization problems. We also plan to explore accelerated algorithms from the time discretization of the same differential system incorporating Tikhonov regularization to solve convex-concave bilinear saddle point problems.

\section*{Acknowledgements}
{\small The authors would like to express their sincere thanks to the   anonymous reviewers for the constructive suggestions and comments.}

\section*{Funding}
\small{  This research is supported by the Natural Science Foundation of Chongqing (CSTB2024NSCQ-MSX0651) and the Team Building Project for Graduate Tutors in Chongqing (yds223010).}

\section*{Data availability}

 \small{ The authors confirm that all data generated or analysed during this study are included in this article.}

 \section*{Declaration}

 \small{\textbf{Conflict of interest} No potential conflict of interest was reported by the authors.}

%--------------BIBLIOGRAPHY---------------
\bibliographystyle{plain}

\end{document}